\theoremstyle{plain}
\newtheorem{theorem}{Theorem}[section]
\newtheorem{lemma}[theorem]{Lemma}
\newtheorem{prop}[theorem]{Proposition}
\newtheorem{corollary}[theorem]{Corollary}
\newtheorem*{corollary*}{Corollary}
\theoremstyle{definition}
\newtheorem{remark}[theorem]{Remark}
\theoremstyle{definition}
\newcommand{\smallpmod}[1]{\mkern 8mu({\operator@font mod}\mkern 6mu#1)}
\numberwithin{equation}{section}
\title{Green's function on the Tate curve}
\author{An Huang, Rebecca Rohrlich, Yaojia Sun, Eric Whyman}
\date{}
\begin{document}

\maketitle

\begin{abstract}
Motivated by the question of defining a $p$-adic string worldsheet action in genus one, we define a Laplacian operator on the Tate curve, and study its Green's function. We show that the Green's function exists. We provide an explicit formula for the Green's function, which turns out to be a non-Archimedean counterpart of the Archimedean Green's function on a flat torus. In particular, it turns out that this Green's function recovers the N\'eron local height function for the Tate curve in the $p\to\infty$ limit, when the $j$-invariant has odd valuation. So this non-Archimedean height function now acquires a physics meaning in terms of the large $p$ limit of a non-Archimedean conformal field theory two point function on the Tate curve, as well as a direct analytic interpretation as a Green's function, on the same footing as in the Archimedean place.
\end{abstract}

\section{Introduction}

In 1989, Freund and Witten \cite{FW1987} observed that the Veneziano amplitude satisfies an infinite product formula. Together with \cite{FO1987}, they proposed that there exists a $p$-adic open string worldsheet action in genus zero, which should produce a $p$-adic Veneziano amplitude, such that the reciprocal of the product of all these $p$-adic amplitudes recovers the Archimedean Veneziano amplitude. They also extended the observation to closed strings. Soon after, Zabrodin \cite{Z1989} confirmed this proposal, by showing that a $p$-adic open string action on the Bruhat-Tits tree $T_p$ of the group $PGL(2,\mathbb{Q}_p)$ does exactly that. By integrating out bulk fields, Zabrodin further confirmed that this action has an equivalent description (or called dual description) by another action defined on $\mathbb{Q}_p$, i.e. the asymptotic boundary of $T_p$ minus a point at infinity.

This equivalent action on $\mathbb{Q}_p$ turns out to be an action given by a single kinetic term defined by the regularized Vladimirov derivative, which is known to be a pseudo-differential operator whose symbol is given by the $p$-adic norm function. Deforming the norm function by a quasi-character of $\mathbb{Q}_p^{\times}$, one obtains a family of deformations of the regularized Vladimirov derivative, and therefore also this free action \cite{HSZ2022}. It turns out these are generalized free actions. Therefore the Green's function of this family of operators is a crucial object to compute. Here, a striking observation is that these Green's functions are given by the local functional equation in Tate's thesis. Furthermore, the global functional equation in Tate's thesis turns out to be equivalent to a product formula of these Green's functions: i.e., upon a regularization by analytic continuation, the product of these Green's functions over all places is equal to 1.

Later, it was also observed that this deformed family of action defined at all the places of a number field is of interest from several points of view \cite{HSZ2022}: it is closely related to dimensional regularization in physics and Hecke L-functions in number theory. At the complex place, it turns out that the deformation gives rise to a way to construct Verma modules of $sl(2,\mathbb{C})$. It gives rise to a family of conformal field theories at non-Archimedean places, in the sense that the action is invariant under a deformed action of a central extension of $GL(2,F)$ on the scalar field, where $F$ is the non-Archimedean local field. This representation theoretic aspect and its generalization shall be investigated in a separate article. Furthermore, it offers a new physical interpretation of the quadratic reciprocity law, that can potentially generalize to also incorporate more general abelian reciprocity laws \cite{HSZ2022}, as the family of actions enjoy both global conformal symmetry and Galois group symmetry, which commute with each other. In addition, very recently it has been realized that this deformation family also appears to give rise to a $p$-adic counterpart of the classical work of Caffarelli and Silvestre \cite{CL2007} regarding fractional Laplacians, that at the same time also generalizes Zabrodin's dual pair of theories on the Bruhat-Tits tree $T_p$ and on its boundary \cite{Z1989} to the deformations. This aspect shall be investigated in a separate article.

As a next step, one is interested in looking for a $p$-adic string action in genus one. This has been an important open question. As the genus zero story shows, one would expect the genus one theory to be important for both physical and mathematical reasons.

In this article, we consider a special case given by the Tate curve, and propose a $p$-adic string worldsheet action on a Tate curve. We take ($\mathbb{Q}_p$-points of) the Tate curve $E_q=\mathbb{Q}_p^{\times}/q^{\mathbb{Z}}$ with $|q|<1$. A real valued locally constant scalar field on $E_q$ can be pulled back to a periodic function $\phi: \mathbb{Q}_p^{\times}\to\mathbb{R}$. 

We propose the following worldsheet action for the Tate curve 
\begin{equation}\label{action}
S=c\int_{E}\phi(x)D\phi(x)\,d\mu^{\times},
\end{equation}
where $d\mu^{\times}=\frac{dx}{|x|}$ is a multiplicative Haar measure, $c$ is a normalization constant, and $E$ is a fundamental domain of $E_q$.
\begin{equation}\label{D}
D\phi(x):=\int_{E}\frac{\phi(z)-\phi(x)}{|z-x|^2}|x|\,dz,
\end{equation}
where $dz$ is an additive Haar measure on $\mathbb{Q}_p$. 

The main idea here is that the action is locally identical to the genus zero $p$-adic Bosonic string action defined on the asymptotic boundary of $T_p$ by the regularized Vladimirov derivative, a non-Archimedean conformal field theory of free scalar Bosons mentioned earlier. Also see e.g. \cite{HMST}-- the only difference is the domain of integration is now $E$ instead of $\mathbb{Q}_p$. Recall that in Archimedean string theory, the worldsheet actions of any genus are locally identical after gauge fixing the worldsheet metric, as a local patch of a real surface is conformally flat.

Therefore, we propose $D$ to be a $p$-adic version of the flat Laplacian on the torus. 

\begin{remark}

In genus 0, the regularized Vladimirov derivative $D_1$ mentioned above is defined by the following integral operator on the space of compactly supported locally constant functions on $\mathbb{Q}_p$:
\begin{equation}
D_1\phi(x):=\int_{\mathbb{Q}_p}\frac{\phi(z)-\phi(x)}{|z-x|^2}\,dz,
\end{equation}

with the free scalar Boson CFT action given by
\begin{equation}
S_1=c\int_{\mathbb{Q}_p}\phi(x)D_1\phi(x)\,dx,
\end{equation}

As mentioned earlier, $D_1$ is a pseudo-differential operator whose symbol is the $p$-adic norm function, thus it is a direct $p$-adic counterpart of the 2d Laplacian on the complex plane. This has been generalized to pseudo-differential operators defined by an arbitrary quasi-character on a characteristic zero local field in \cite{huang2021greens}. Again as mentioned earlier, this family of operators turns out to be important for a variety of reasons. Non-Archimedean pseudo-differential operators in a more general setup have been considered in e.g. \cite{MR3586737}.

Consider the Archimedean exponential map $w\to z=x+yi=e^{-iw}$, mapping the cylinder to $\mathbb{C}^{\times}$, or half the cylinder to the upper half plane. $w=\sigma+i\tau$ where $\sigma, \tau$ are spatial and time coordinates on the worldsheet of a free closed or open string. Under the coordinate transformation from $\sigma,\tau$ to $x,y$, the Laplacian gets multiplied by the number theoretic norm $|x+yi|_\mathbb{C}=x^2+y^2$, and the measure $d\sigma \,d\tau$ becomes $\frac{1}{|x+yi|_\mathbb{C}}\,dx\,dy$. This analogy provides another motivation for our definition of the action $S$ and the operator $D$.
\end{remark}

Since multiplication by a unit does not change the above action, we consider only the case $q=p^m$. Take $E$ to be $\cup_{s=0}^{m-1}p^s\mathbb{Z}_p^\times$. One checks that the action is independent of the choice of $E$: i.e., replacing $E$ by a shift of $E$ by a power of $p^m$, the action is invariant.

To study the physics, it is crucial to understand the operator $D$. One sees readily that some of the basic properties of $D$ are similar to the genus zero case, that $D$ is self-adjoint, negative semi-definite, and preserves locally constant functions under the integral pairing $<f, g>:=\int_Efg\,d\mu^{\times}$.

A technical advantage here is that one can filter the space of fields by finite-dimensional vector spaces of functions on $\mathbb{Q}_p^{\times}$ descending to $E_q$, such that the conductor of these functions is bounded by a given natural number $k$. Equivalently, one considers projections to the finite quotients $\pi^k: E_q\to E_q/(1+p^k\mathbb{Z}_p)$, and considers only functions that are pullbacks of functions from these quotients. This is a filtration by functions depending on only up to the leading $k$ $p$-adic digits. 

Alternatively, this filtration is nothing but truncating the graph $T_p/\Gamma$ up to radius $k$, where $\Gamma$ is the discrete subgroup of $PGL(2,\mathbb{Q}_p)$ generated by $\begin{bmatrix} q & 0\\0 & 1\end{bmatrix}$. Note that the same type of truncation on the Bruhat-Tits tree $T_p$ was used in a crucial way by Zabrodin to compute the $p$-adic genus zero 4-tachyon scattering amplitude, and to derive the asymptotic boundary dual $p$-adic string worldsheet action.

The Green's function is again of central importance, as it is the two point correlation function of the action \eqref{action}, up to a shift by a constant. It is also expected to be a basic building block for computing $p$-adic string torus amplitudes.
For each $k$, the Green's function is simply a solution to a finite dimensional matrix equation $DG=\delta-\frac{1}{V}$, where $\delta$ is the Dirac Delta distribution, and $V=m\mu^{\times}(\mathbb{Z}_p^\times)$ is the volume of $E_q$.

As is mentioned earlier, for the $p$-adic version of the flat Laplacian and its fractional powers in the genus zero case, defined on $\mathbb{Q}_p$, their Green's functions turn out to be closely related to Tate's thesis \cite{huang2021greens}. On the other hand, the Green's function on the flat Archimedean torus has been a classical object of interest. In particular, in a recent breakthrough, Lin and Wang described its number of critical points as a function of the torus moduli \cite{lin2010elliptic,lin2010function}. Therefore, from these perspectives, it would also be of interest to investigate the $p$-adic counterpart of the Laplacian on the flat torus, and its Green's function. There are results for the Green's function in somewhat different contexts, e.g. on the upper half-plane and p-adic domains \cite{hassan2025padichighergreensfunctions,bradley2025boundaryvalueproblemspadic}, and the existence of the Green's function on p-adic manifolds with respect to the fractional Laplacian for $s>1$ has been proved \cite{bradley2025diffusionoperatorspadicanalytic}.

The main result of this paper is an explicit formula for this Green's function, in terms of a power series plus a simple correction term:
\begin{theorem}\label{MainThm}
    The unique (up to an additive constant) symmetric Green's function for the $p$-adic Laplacian $D$ on the Tate curve is given by $G_{p,m}(x,y)=B_{p,m}(x,y)+C_{p,m}(x,y)$, where
    \begin{equation*}\tag{\ref{E:Bpmxy}}
        B_{p,m}(x,y)=A(y)+\lambda_0\log(d(x,y))+\sum_{n=1}^{\infty}\lambda_n(y)\,d(x,y)^n,
    \end{equation*}
    $d(x,y)=\dfrac{|x-y|}{\max\{|x|,|y|\}}$, and $C_{p,m}(x,y)$ depends only on $|x|$ and $|y|$ and is given by the recursive formula \eqref{E:CRecur}. The constant $\lambda_0\in\mathbb{R}$ is given by \eqref{E:lambda0}, and $\lambda_n(y),A(y)\in\mathbb{R}$ depend only on $|y|$ and are given by \eqref{E:lambdan} and \eqref{E:Ay}, respectively.
\end{theorem}

This is a $p$-adic counterpart of the Archimedean case. In particular, in both cases, the Green's function near the diagonal is given by a log singularity, plus a power series correction. In the Archimedean case, the power series correction is needed because it makes the Green's function well-defined on the torus, whereas in the non-Archimedean case, it turns out that the power series correction is needed because the log singularity itself does not satisfy the Green's equation. On the other hand, there is another very recently discovered closed formula for this Green's function in terms of a finite sum, and a third closed formula in terms of the q-digamma function for $q=p$, both of which we plan to investigate in the near future. 

In addition, one can analyze the spectrum of $D$. We have observed that the spectral gap behaves as expected, and the spectrum does satisfy a Weyl asymptotics, that is a non-Archimedean counterpart of the Archimedean version of the Weyl asymptotics. Furthermore, we have studied the Bosonic partition function given by the spectrum of $D$ when $m=1$, and found that its leading term gives a $p$-adic counterpart of the entropy of the Archimedean Bosonic string. Some of these issues are investigated in detail in \cite{HJ2025}.

Last but not least, it turns out that our Green's function recovers the N\'eron local height function for the Tate curve in the $p\to\infty$ limit, when the valuation of the $j$-invariant is odd:
\begin{corollary*}[Corollary \ref{HeightCor}]
When $m$ is odd, after choosing the new ``central normalization'' of $C_{p,m}(x,y)=0$ for $v_{p}(x)=v_{p}(y)=\frac{m-1}{2}$, we have 
\begin{equation*}\tag{\ref{height}}
\lim_{p\to\infty}\left(-\frac{1}{p}G\left(xp^{\frac{m-1}{2}},p^{\frac{m-1}{2}}\right)\right)=h(x)-\frac{m}{12},
\end{equation*} where $h(x)$ is the N\'eron local height of the point $x$ \cite{Silverman}.
\end{corollary*}
A consequence is that conceptually, one can now think of this non-Archimedean local height function in terms of a Green's function on the curve, instead of having to resort to, e.g., the Berkovich space at non-Archimedean places. Speaking in terms of physics, this translates into the observation that the local height function on the Tate curve is the large $p$ limit, or the leading term of the two point correlation function of the free scalar conformal field theory on the Tate curve. So this important arithmetic geometric function aquires  direct physics and analytic meaning! 

\section*{Acknowledgement}
AH and YJS are grateful to SIMIS for its hospitality during an important phase of this work. The work of AH is supported by Simons collaboration grant No. 708790. AH thanks Chin-Lung Wang for first pointing out the possibility that the Green's function discussed in this paper could be related to the height function. The work of EW is partially supported by the Robert and Charlotte Joly Endowed Scholarship Fund.

\subsection{Outline}

\hspace{\parindent}In section 2, we investigate some basic properties of our $p$-adic Laplacian $D$ and the Green’s function $G$. We then prove the existence of the Green’s function by taking the limit of the solutions on finite quotients.

In section 3, we prove that the Green's function has the form $G(x,y)=B(x,y)+C(x,y)$, where $B$ (given by \eqref{E:Bpmxy}) is an infinite series in $d(x,y)=\dfrac{|x-y|}{\max\{|x|,|y|\}}$, depending only on $|x|$, $|y|$, and $|x-y|$; $C$ depends only on $|x|$ and $|y|$; and both $B$ and $C$ are symmetric. In the $m=1$ case, $G=B$; $C$ is a `correction' to $B$ to account for the dependence of $DB(x,y)$ on $|x|$ and $|y|$. We compute an explicit infinite convergent series expansion of $B(x,y)$ in $d(x,y)$ by correcting a log singularity on the diagonal.

In section 4, we study the properties of $C(x,y)$, including proving its existence (which completes the proof that $G(x,y)=B(x,y)+C(x,y)$), and we prove a set of recursive formulas for explicitly computing $C$, which completes the proof of the Main Theorem \eqref{MainThm}.

In section 5, we explain the relation between our Green's function and the N\'eron local height function for the Tate curve. We also give remarks on a potential framework from physics, to understand the relation in a more conceptual way. 

In section 6, we generalize our results from sections 3 and 4 to finite extensions of $\mathbb{Q}_p$.

\subsection{Table of notation}

\begingroup
\begin{center}
\renewcommand{\arraystretch}{1.5}
\begin{tabular}{|c|l|}
\hline
\textbf{Notation} & \textbf{Meaning} \\\hline\hline
$p$ & A fixed prime $p\in\mathbb{N}$, considered as either an element of $\mathbb{R}$ or of $\mathbb{Q}_p$ \\\hline
$v_p$ & The $p$-adic valuation on $\mathbb{Q}_p$ \\\hline
$|\cdot|$ & The $p$-adic norm on $\mathbb{Q}_p$; $|x|=p^{-v_p(x)}$ \\\hline
$\log$ & The logarithm base $p$ \\\hline
\rule{0pt}{17pt}$d(x,y)$ & $\dfrac{|x-y|}{\max\{|x|,|y|\}}$ \\[7pt]\hline
\rule{0pt}{20pt}$E$ & A fundamental domain of the Tate curve $E_q=\mathbb{Q}_p^\times/q^\mathbb{Z}$; for $q=p^m$, $E=\displaystyle\bigcup_{s=0}^{m-1}p^s\mathbb{Z}_p^\times$ \\[10pt]\hline
\rule{0pt}{17pt}$D$ & $D\phi(x):=\displaystyle\int_{E}\dfrac{\phi(z)-\phi(x)}{|z-x|^2}|x|\,dz$ \\[7pt]\hline
$G$ & The Green's function for $D$; $G=B+C$ (for $q=p^m$, we denote $G$ by $G_{p,m}$) \\\hline
$B$ & The part of $G$ given as a series in $d(x,y)$ (denoted $B_{p,m}$ for $q=p^m$) \\\hline
$C$ & The `correction' to $B$ depending only on $|x|$ and $|y|$ (denoted $C_{p,m}$ for $q=p^m$) \\\hline
\rule{0pt}{21pt}$\partial y_\ell$ & $\left\{z\in\displaystyle\bigcup_{s=0}^{m-1}p^s\mathbb{Z}_p^\times\,\middle|\,v_p(z)=v_p(y),v_p(z-y)=\ell\right\}$ \\[11pt]\hline
$U(x)$ & $p^{-v_p(x)}+p^{-m+v_p(x)+1}=|x|+p^{1-m}|x|^{-1}$ (sometimes denoted $U(i)$ for $i=v_p(x)$) \\\hline
$\Lambda_i$ & $1+p^{-1}-p^{-i-1}-p^{-m+i}=1+p^{-1}-p^{-1}U(i)$ \\\hline
$\mu^+$ & The Haar measure on $\mathbb{Q}_p$ with $d\mu^{+}=dx$, normalized to have $\mu^+(\mathbb{Z}_p)=1$ \\\hline
\rule{0pt}{12pt}$\mu^\times$ & The Haar measure on $\mathbb{Q}_p^{\times}$ with $d\mu^{\times}=\frac{dx}{|x|}$, normalized to have $\mu^\times(\mathbb{Z}_p^\times)=1-p^{-1}$ \\[2pt]\hline
\end{tabular}
\end{center}
\endgroup

\newpage

\section{Basic properties of \texorpdfstring{$D$}{D} and \texorpdfstring{$G$}{G}}

\begin{lemma} \label{DPresLCLemma}
    $D$ is self-adjoint, negative semi-definite, and preserves locally constant functions under the integral pairing $<f, g>\,:=\int_Efg\,d\mu^{\times}$.
\end{lemma}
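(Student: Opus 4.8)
The plan is to obtain all three assertions from a single symmetrization of the double integral defining the pairing $<Df, g>$. First I would write out
\[
<Df, g> = \int_E \left(\int_E \frac{f(z)-f(x)}{|z-x|^2}\,dz\right) g(x)\,dx,
\]
where the factor $|x|$ in the definition of $D$ cancels against the $|x|^{-1}$ in $d\mu^{\times}=dx/|x|$. Since $E=\bigcup_{s=0}^{m-1}p^s\mathbb{Z}_p^\times$ is compact and $f,g$ are locally constant, the numerator $f(z)-f(x)$ vanishes on a ball around the diagonal $z=x$, so the kernel singularity is never reached and the double integral converges absolutely; this is the one technical point I would check carefully, and it is what licenses the Fubini-type rearrangements below.

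The key move is to relabel the two integration variables, both of which range over $E$. Swapping $x\leftrightarrow z$ and using the symmetry of the kernel $|z-x|^{-2}$ produces a second expression for $<Df,g>$ with an overall sign change; averaging the two yields the manifestly symmetric form
\[
<Df,g> = -\tfrac12 \int_E\int_E \frac{(f(z)-f(x))(g(z)-g(x))}{|z-x|^2}\,dz\,dx.
\]
Self-adjointness is then immediate, since the right-hand side is unchanged under $f\leftrightarrow g$ and the pairing itself is symmetric, so $<Df,g> = <f,Dg>$. Negative semi-definiteness follows by setting $g=f$: the integrand becomes $(f(z)-f(x))^2/|z-x|^2\ge 0$ for real-valued $f$, whence $<Df,f>\le 0$.

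For preservation of locally constant functions I would show that if $\phi$ has conductor at most $k$, i.e. $\phi$ is invariant under multiplication by $1+p^k\mathbb{Z}_p$, then so is $D\phi$. Fixing $u\in 1+p^k\mathbb{Z}_p$, and using $|u|=1$ together with $\phi(ux)=\phi(x)$, I would substitute $z=uw$ in $D\phi(ux)$. Multiplication by the unit $u$ preserves each $p^s\mathbb{Z}_p^\times$, hence preserves $E$, and fixes the additive Haar measure since $|u|=1$; moreover $|uw-ux|=|w-x|$ and $\phi(uw)=\phi(w)$. These identities collapse $D\phi(ux)$ back to $D\phi(x)$, so $D\phi$ is invariant under $1+p^k\mathbb{Z}_p$ and its conductor is again at most $k$. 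The same local-constancy-near-the-diagonal argument shows that $D\phi(x)$ is itself a convergent integral.

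I expect the main obstacle to be technical rather than conceptual: making the handling of the singular kernel rigorous, that is, confirming that local constancy forces the numerators to vanish in a neighborhood of the diagonal (of additive radius $p^{-k}|x|$), so that every integral in sight is a genuinely convergent integral over a compact domain and the variable-relabeling and change-of-variables steps are fully justified. Once that is pinned down, the symmetrization identity does the real work, delivering self-adjointness and negative semi-definiteness simultaneously, while the invariance argument settles the final claim.
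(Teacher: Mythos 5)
Your proposal is correct and takes essentially the same route as the paper: the same symmetrization of the double integral over $E\times E$ (which you package slightly more efficiently into one identity giving self-adjointness and negative semi-definiteness together), and an invariance argument for preservation of local constancy, where your use of multiplicative invariance under $1+p^k\mathbb{Z}_p$ is equivalent on the compact $E$ to the paper's additive translation by $p^n\mathbb{Z}_p$. Your explicit check that local constancy kills the kernel singularity near the diagonal, so all integrals converge absolutely and the relabelings are licensed, is a point the paper leaves implicit.
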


\begin{proof}
    We prove it by direct calculation. First, for any locally constant function $f$ and $g$ on $E$, we have
    \begin{align*}
        <Df,g>\,=&\int_EDf(x)g(x)\,d\mu^{\times}(x)\\
        =&\int_E\left(|x|\int_E\frac{f(z)-f(x)}{|z-x|^2}\,dz\right)g(x)\,\frac{dx}{|x|}\\
        =&\int_E\int_E\frac{f(z)g(x)-f(x)g(x)}{|z-x|^2}\,dz\,dx\\
        =&\int_E\int_E\frac{g(z)f(x)-f(x)g(x)}{|z-x|^2}\,dz\,dx\\
        =&<f,Dg>
    \end{align*}
    thus $D$ is self adjoint.
    
    Next, as
    \begin{align*}
        <Df,f>\,=&\int_EDf(x)f(x)\,d\mu^{\times}(x)\\
        =&\int_E\left(|x|\int_E\frac{f(z)-f(x)}{|z-x|^2}\,dz\right)f(x)\,\frac{dx}{|x|}\\
        =&\int_E\int_E\frac{f(z)f(x)-f(x)^2}{|z-x|^2}\,dz\,dx\\
        =&\int_E\int_E\frac{f(x)f(z)-f(z)^2}{|z-x|^2}\,dz\,dx\\
        =&\,\frac{1}{2}\int_E\int_E\frac{2f(z)f(x)-f(x)^2-f(z)^2}{|z-x|^2}\,dz\,dx\\
        =&-\frac{1}{2}\int_E\int_E\frac{(f(z)-f(x))^2}{|z-x|^2}\,dz\,dx\\
        \leq&\,0
    \end{align*}
    and we notice that $D\mathds{1}_E(x)=0$, so $D$ is negative semi-definite.
    
    Last, as $f$ is locally constant, then $\forall x\in E$, $\exists$ a neighborhood $x+p^n\mathbb{Z}_p$ of $x$ for some $n$, such that $\forall x_0\in p^n\mathbb{Z}_p$, we have $f(x)=f(x+x_0)$. Then
    \begin{align*}
        Df(x)=|x|\int_E\frac{f(z)-f(x)}{|z-x|^2}\,dz=|x+x_0|\int_E\frac{f(z)-f(x+x_0)}{|z-(x+x_0)|^2}\,dz=Df(x+x_0)
    \end{align*}
    which means $D$ preserves locally constant functions.
\end{proof}

\begin{lemma}
For each $k$, there is a unique symmetric Green's function on the finite quotient of the Tate curve $E_q/(1+p^k\mathbb{Z}_p)$, up to adding a constant.
\end{lemma}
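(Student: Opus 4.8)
The plan is to recast the statement as a finite-dimensional linear-algebra problem. For fixed $k$, let $W$ be the (finite-dimensional) vector space of real-valued functions on the finite quotient $X = E_q/(1+p^k\mathbb{Z}_p)$, that is, functions on $E$ of conductor at most $k$, equipped with the pairing $\langle f,g\rangle = \int_E fg\, d\mu^\times$. Since every cell has positive measure, this pairing is a genuine positive-definite inner product, and by the preceding lemma $D$ acts on $W$ as a self-adjoint, negative semi-definite operator with $D\mathds{1}_E = 0$. The Green's function equation is then the column-wise system $D_x G(\cdot,y) = \delta_y - \tfrac1V\mathds{1}_E$, and "symmetric up to a constant" becomes a statement about solving this system with a self-adjoint operator whose kernel is understood.

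First I would determine $\ker D$ exactly. The identity $\langle Df,f\rangle = -\tfrac12\int_E\int_E \frac{(f(z)-f(x))^2}{|z-x|^2}\,dz\,dx$ established in the preceding lemma shows the quadratic form vanishes iff $f(z)=f(x)$ for all $z,x\in E$, since $|z-x|^{-2}$ and the measure are strictly positive off the diagonal; hence the form vanishes exactly on constants. Because $D$ is self-adjoint and negative semi-definite, the spectral decomposition gives $\langle Df,f\rangle = 0 \Rightarrow Df=0$, so $\ker D$ is precisely the one-dimensional line of constants. Self-adjointness then yields the orthogonal splitting $W = \ker D \oplus \operatorname{im}D$ with $\operatorname{im}D = (\ker D)^\perp = W_0 := \{g : \int_E g\,d\mu^\times = 0\}$, and $D|_{W_0}\colon W_0 \to W_0$ is an isomorphism.

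Next I would verify the right-hand side lands in $\operatorname{im}D$ and build the symmetric solution. With $\delta_y$ normalized so that $\langle f,\delta_y\rangle = f(y)$, one has $\langle\delta_y,\mathds{1}_E\rangle = 1$ and $\langle\tfrac1V\mathds{1}_E,\mathds{1}_E\rangle = 1$, so $g_y := \delta_y - \tfrac1V\mathds{1}_E \in W_0 = \operatorname{im}D$; the Fredholm alternative thus gives a solution for each $y$, unique modulo $\ker D$. Setting $T := (D|_{W_0})^{-1}$, which is self-adjoint, I define $G(\cdot,y) := Tg_y$, the unique solution in $W_0$. Writing $\delta_x = g_x + \tfrac1V\mathds{1}_E$ and using $Tg_y\perp\mathds{1}_E$ gives $G(x,y) = \langle Tg_y,\delta_x\rangle = \langle Tg_y,g_x\rangle = \langle g_y,Tg_x\rangle = G(y,x)$, so $G$ is symmetric. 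For uniqueness: any two solutions of the system differ column-wise by an element of $\ker D$, i.e.\ by some $h(y)$ constant in $x$; imposing symmetry on both forces $h(y)=h(x)$, hence $h$ is a global constant.

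The main obstacle I anticipate is not conceptual but careful bookkeeping of the weighted inner product $\langle f,g\rangle = \int_E fg\, d\mu^\times$: one must get the normalization of $\delta_y$ as the dual of point evaluation right, and check that it is exactly this normalization, together with the volume $V$, that makes $g_y$ orthogonal to $\ker D$ and thereby forces the specific correction $\tfrac1V$. The only step demanding a genuine (though short) argument is showing the quadratic form vanishes solely on constants; once the kernel is identified, the existence, symmetry, and uniqueness up to a constant are the standard consequences of the Fredholm alternative for a self-adjoint operator.
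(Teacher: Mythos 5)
Your proof is correct and follows essentially the same route as the paper: reduce to a finite-dimensional self-adjoint, semi-definite operator, identify $\ker D$ with the constants, solve on the orthogonal complement (the paper uses the quotient by constants, which is the same thing), get symmetry from self-adjointness of the inverse, and get uniqueness because the column-wise ambiguity is a function of $y$ alone that symmetry forces to be constant. The only substantive addition is that you actually justify $\ker D = \mathbb{R}\mathds{1}_E$ via the quadratic form, a step the paper asserts without proof.
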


\begin{proof}
First, one verifies from the integral definition of $D$, that $D$ preserves the conductor $k$. Therefore, the Green's equation is a well-defined equation on each such finite quotient $E_k$. One also checks that the kernel of $D$ consists of constant functions. Furthermore, the RHS of the Green's equation also has kernel given by constant functions.

Up to an overall scaling, delta functions supported at points of $E_k$ give rise to an orthonormal basis w.r.t. the integral pairing as in Lemma \ref{DPresLCLemma}. The Green's equation on $E_k$, as a matrix equation, may be viewed as a matrix presentation of an equation of linear transformations, under this basis. 

As $D$ is self-adjoint, on $E_k$, $D$ projects the function space $V$ onto $W$, the orthogonal complement of its kernel. Choose an orthonormal basis of $W$, then its image under the composition $W\to V\to V/<1>$ also gives rise to a basis of the quotient space $V/<1>$. Consider the Green's equation on this quotient space, under this basis, $D$ is symmetric and nonsingular. The RHS becomes the identity matrix. Thus it has a unique solution, and the solution is symmetric. Next, by the Green's equation, any Green's function on $E_k$ has to map the constant function to the kernel of $D$, i.e. the constant function. So the Green's function preserves constant functions. This implies that upon extending our orthonormal basis of $W$ to an orthonormal basis of $V$ by adding a constant function, our unique solution to the Green's equation on $V/<1>$ extends to a Green's function on $V$, represented by a block upper triangular matrix under our orthonormal basis, with the 1st block of size 1, corresponding to the subspace $<1>$. Therefore, one may subtract suitable constants from each column, to get a Green's function that is represented by a symmetric matrix under this basis. So the resulting Green's function is also symmetric under the old delta function basis, as these two basis are related by an orthogonal transformation.

Finally, if there are two symmetric Green's functions on $E_k$, their difference is again symmetric, and is annihilated by $D$. Therefore the difference is a constant matrix.
\end{proof}

\begin{corollary}
If there is a symmetric Green's function on the Tate curve, as a distribution on the space of continuous functions on the Tate curve, then it is the unique symmetric Green's function on the Tate curve up to adding a constant.
\end{corollary}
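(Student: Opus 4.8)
The plan is to reduce the statement to the finite-quotient uniqueness already established in Lemma~2.2 and then pass to the limit over the conductor. Suppose $G_1$ and $G_2$ are two symmetric Green's functions on the Tate curve, each satisfying $DG_i=\delta-\frac{1}{V}$ as a distribution on continuous functions. Since the inhomogeneous term $\delta-\frac{1}{V}$ is the same for both, the difference $H:=G_1-G_2$ is a symmetric distribution annihilated by $D$, i.e.\ $DH=0$. The goal is to show that any such $H$ is a constant, for then $G_1=G_2+c$.

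The key tool is that $D$ preserves the conductor: as established in the finite-quotient lemma above, $D$ maps functions of conductor $k$ to functions of conductor $k$, and by the self-adjointness of $D$ this means its action on distributions respects the filtration by conductor. Concretely, pairing $H$ against pullbacks $(\pi^k)^*f$ of functions $f$ on the finite quotient $E_k=E_q/(1+p^k\mathbb{Z}_p)$ and using $\langle DH,(\pi^k)^*f\rangle=\langle H,D(\pi^k)^*f\rangle=0$, one realizes a level-$k$ truncation $H_k$ of $H$ as a symmetric element of the kernel of $D$ on $E_k$. By the computation in the finite-quotient lemma, that kernel is exactly the constants, so each $H_k$ equals some constant $c_k$. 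The truncations are compatible under the projections $E_{k+1}\to E_k$, which forces $c_{k+1}=c_k$; write $c$ for this common value.

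Finally, since the Tate curve is compact, the locally constant (finite-conductor) functions are dense in the continuous functions, and $H$, being a continuous distribution that agrees with the constant $c$ on every locally constant function, must equal $c$ on all continuous functions. Hence $G_1=G_2+c$, which is the claimed uniqueness. The main obstacle I anticipate is making the second step rigorous: one must carefully justify that "$H$ is annihilated by $D$ as a distribution" descends to "$H_k$ lies in the kernel of $D$ on each $E_k$," and that the system of truncations determines $H$ via density — in particular verifying the compatibility $c_{k+1}=c_k$ so that the constants do not drift as $k\to\infty$. An alternative to the density step is to fix one variable and use symmetry directly: $D_xH(\cdot,y)=0$ makes $H(\cdot,y)$ constant in $x$, say $c(y)$, and symmetry $H(x,y)=H(y,x)$ then gives $c(y)=c(x)$, forcing $H$ to be globally constant; the same finite-quotient reduction underlies the claim that the kernel of $D$ consists only of constants.
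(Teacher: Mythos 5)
Your proposal is correct and follows essentially the same route as the paper: the paper's own proof is a one-line appeal to passing to the finite quotients (where uniqueness up to a constant was established in the preceding lemma) combined with the density of finite-conductor functions in the continuous functions on the compact Tate curve. Your write-up simply makes explicit the details the paper leaves implicit — forming the difference $H$, using conductor-preservation and self-adjointness of $D$ to truncate to each $E_k$, and checking compatibility of the resulting constants.
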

\begin{proof}
Going to the finite quotients, together with the fact that functions that descend to finite quotients are dense in the space of continuous functions on the Tate curve, the corollary is obtained.
\end{proof}

\begin{corollary}
Any symmetric Green's function on any $E_k$ is invariant under multiplication by units: i.e. $G(ux,uy)=G(x,y)$ for any $p$-adic unit $u$.
\end{corollary}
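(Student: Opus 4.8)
The plan is to treat multiplication by a unit as a symmetry of the whole construction and then appeal to the preceding lemma (uniqueness of the symmetric Green's function up to an additive constant). Fix a symmetric Green's function $G$ on $E_k$ and a unit $u\in\mathbb{Z}_p^\times$, and set $G^u(x,y):=G(ux,uy)$. I would first record that the map $x\mapsto ux$ preserves every ingredient of the problem: it sends $E=\bigcup_{s=0}^{m-1}p^s\mathbb{Z}_p^\times$ onto itself because $u\mathbb{Z}_p^\times=\mathbb{Z}_p^\times$, it preserves the multiplicative measure $d\mu^\times=dx/|x|$, and hence it descends to a measure-preserving bijection of the finite quotient $E_k$.

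The crucial step is that $D$ commutes with this action. Writing $(R_u\phi)(x):=\phi(ux)$, in the defining integral for $D(R_u\phi)(x)$ I would substitute $w=uz$; since $|u|=1$ we have $dz=dw$, the substitution fixes $E$, and $|w-ux|=|u|\,|z-x|=|z-x|$, while $|x|=|ux|$. Collecting these identities gives $D(R_u\phi)(x)=(D\phi)(ux)=(R_uD\phi)(x)$, i.e. $DR_u=R_uD$. Because the Dirac delta and the constant $\frac1V$ on $E_k$ are likewise invariant under the measure-preserving bijection $x\mapsto ux$, it follows that $G^u$ again solves $DG^u=\delta-\frac1V$; and $G^u$ is symmetric since $G$ is. Thus $G^u$ is a symmetric Green's function on $E_k$, so by the preceding lemma there is a constant $c(u)\in\mathbb{R}$ with $G^u=G+c(u)$.

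To finish I would show $c(u)=0$. From $G^{uv}=(G^u)^v$ one reads off $c(uv)=c(u)+c(v)$ together with $c(1)=0$, so $u\mapsto c(u)$ is an additive homomorphism on units. Since the unit action on $E_k$ factors through the finite group $\mathbb{Z}_p^\times/(1+p^k\mathbb{Z}_p)$, of some order $N$, every $u$ satisfies $u^N\equiv 1$ on $E_k$, whence $N\,c(u)=c(u^N)=0$; as $\mathbb{R}$ is torsion-free this forces $c(u)=0$, and therefore $G(ux,uy)=G(x,y)$. Note this is consistent with the statement being about \emph{any} symmetric Green's function: replacing $G$ by $G+\text{const}$ leaves $G^u-G$ unchanged, so the invariance is insensitive to the choice.

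I expect the only genuinely delicate point to be the commutation identity $DR_u=R_uD$, which is where the hypothesis $|u|=1$ is used twice --- once to keep the additive Jacobian trivial and once to leave both the factor $|x|$ and the kernel $|z-x|^{-2}$ unchanged --- and which relies on the self-similarity $uE=E$. Everything else is bookkeeping, and the removal of the additive constant is immediate from finiteness of the relevant unit group together with the torsion-freeness of $\mathbb{R}$.
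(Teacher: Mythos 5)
Your proof is correct and follows essentially the same route as the paper: pull back $G$ along multiplication by $u$, check via the substitution $w=uz$ (using $|u|=1$ and $uE=E$) that the pullback again solves $DG^u=\delta-\frac1V$ and is symmetric, then invoke the uniqueness lemma. The only difference is cosmetic: where the paper simply asserts ``there is no ambiguity of adding a constant,'' you supply an explicit justification via the cocycle identity $c(uv)=c(u)+c(v)$ and the finiteness of $\mathbb{Z}_p^\times/(1+p^k\mathbb{Z}_p)$, which is a clean (and arguably more complete) way to close that gap.
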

\begin{proof}
Let $G_1(x,y):=G(ux,uy)$. Then
\begin{align*}
  DG_1(x,y)&=\int_E\frac{G_1(z,y)-G_1(x,y)}{|z-x|^2}|x|\,dz\\
&=\int_E \frac{G(uz,uy)-G_1(ux,uy)}{|z-x|^2}|x|\,dz\\
&=\int_E \frac{G(uz,uy)-G_1(ux,uy)}{|uz-ux|^2}|ux|\,duz\\
&=\delta(ux,uy)-\frac{1}{V}\\
&=\delta(x,y)-\frac{1}{V}\\  
\end{align*}

Thus $G_1(x,y)$ is a symmetric Green's function on $E_k$. Since $E_k$ is a finite set, by the uniqueness of symmetric Green's function, $G_1(x,y)=G(x,y)$. (There is no ambiguity here of adding a constant)
\end{proof}

\begin{lemma}\label{digits}
For any fixed $y$, the Green's function $G(x,y)$ on any $E_k$ depends on $x-y$ only up to its leading $(m-v_p(y))$-th $p$-adic digit, where $m=v_p(q)$. 
\end{lemma}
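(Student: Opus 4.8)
The plan is to derive the locality from the Green's equation itself, since the available symmetries cannot produce it: the only nontrivial automorphisms at our disposal are multiplication by a unit (the corollary giving $G(ux,uy)=G(x,y)$) and the periodicity $x\mapsto qx$, and no combination of these fixes $y$ while moving $x$. So I would fix $y$, set $v:=v_p(y)$ and $g(x):=G(x,y)$, and exploit that $g$ satisfies $Dg(x)=-\frac1V$ for every $x$ lying off the $q^{\mathbb Z}$-orbit of $y$, together with the uniqueness of the symmetric Green's function established above.

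The engine is a localization of $D$ to a ball. I would fix a ball $B\subset E$ of radius $p^{-t}$ with center $b$ and with $t>v_p(x-y)$, chosen to miss the orbit of $y$, and split the defining integral of $D$ as $\int_{E\setminus B}+\int_{B}$. For $z\in E\setminus B$ and $x\in B$ the ultrametric inequality gives $|z-x|=|z-b|$, independent of $x\in B$. Hence, \emph{provided the prefactor $|x|$ is constant on $B$}, the equation $Dg(x)=-\frac1V$ collapses to a self-contained equation on $B$,
\begin{equation*}
\int_{B}\frac{g(z)-g(x)}{|z-x|^2}\,dz-N\,g(x)=\mathrm{const},\qquad N:=\int_{E\setminus B}\frac{dz}{|z-b|^2}>0 .
\end{equation*}
The operator $L_B$ on the left has $\mathds 1_B$ as an eigenfunction, $L_B\mathds 1_B=-N\,\mathds 1_B$, and is negative definite (by the same $\iint(g(z)-g(x))^2/|z-x|^2$ computation as in the self-adjointness lemma, plus the strictly negative $-N\|g\|^2$ term), hence invertible. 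Since the right-hand side is a constant multiple of $\mathds 1_B$, the unique solution is itself a multiple of $\mathds 1_B$, so $g$ is \emph{constant} on $B$. A short case check shows $|x|$ is constant on every ball $y+cp^{\ell}+p^{\ell+1}\mathbb Z_p$ except when $\ell=v$, so away from the near-diagonal shell this already pins $g(\cdot,y)$ down to the single leading digit of $x-y$, which is finer than the asserted bound $m-v$.

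The remaining and genuinely delicate case is the cancellation regime $v_p(x-y)=v$, where adding the leading term of $x-y$ to $y$ raises the valuation, so $|x|$ varies across the ball and the eigenfunction argument does not close in one step. Here I would run a finite downward induction on scale: resolving one more digit of $x-y$ on a sub-ball either determines $v_p(x)$, whereupon the localization above applies and $g$ is constant from that scale on, or forces $v_p(x)$ one unit higher. Because points of the fundamental domain $E=\bigcup_{s=0}^{m-1}p^s\mathbb Z_p^\times$ carry valuation in $\{0,\dots,m-1\}$, and because a valuation reaching $m$ is folded back by $x\sim qx=p^m x$ onto another shell, the climbing valuation stays ambiguous for at most $m-v$ steps before it is resolved or cycles, capping the number of relevant digits of $x-y$ at $m-v_p(y)$. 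I expect this folding to be the main obstacle: the eigenfunction reduction is robust, but correctly matching balls of $\mathbb Q_p$ that straddle several shells of $E_q$, and verifying that the localized equation still has constant coefficients after folding, is exactly the step that produces—and that one must control to obtain—the precise exponent $m-v_p(y)$.
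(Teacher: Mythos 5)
Your localization argument is a genuinely different route from the paper's, and its engine is sound: splitting $D\phi(x)$ over a ball $B\subset E$ and its complement, using $|z-x|=|z-b|$ for $z\notin B$ and $x\in B$, and then inverting the localized operator $L_B$ (which is negative definite and has $\mathds{1}_B$ as an eigenvector with eigenvalue $-N$) does force $G(\cdot,y)$ to be constant on any ball that avoids $y$ and on which $|\cdot|$ is constant. However, your opening premise is false: the paper proves the lemma by exactly the kind of symmetry you declare unavailable. For a unit $u\equiv 1 \pmod{p^{\,m-v_p(y)}}$, the affine map $x\mapsto ux+(1-u)y$ (unit multiplication conjugated by translation by $y$) fixes $y$, preserves every shell $p^i\mathbb{Z}_p^\times$ of $E$ because $(1-u)y\in p^m\mathbb{Z}_p$, and preserves the measure and all norms in the integrand; hence it carries the Green's function for this fixed $y$ to another one, and uniqueness up to an (obviously vanishing) additive constant gives the lemma. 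The exponent $m-v_p(y)$ is precisely what makes $(1-u)y$ land in $p^m\mathbb{Z}_p$.

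Within your own approach there is a gap, and it sits exactly where you placed the difficulty. The ``cancellation regime'' $v_p(x-y)=v_p(y)$ needs no induction over scales and no ``folding'' by $x\sim qx$: take $B=x+p^{t}\mathbb{Z}_p$ with $t=\max\{v_p(x),v_p(x-y)\}+1$. Since $t>v_p(x)$, $B$ lies in the single shell $p^{v_p(x)}\mathbb{Z}_p^\times\subset E$ (so nothing straddles shells and $|\cdot|$ is constant on $B$), and since $t>v_p(x-y)$, $B$ excludes $y$; so your eigenfunction argument applies verbatim and $G(\cdot,y)$ is constant on $B$. Membership in $B$ is determined by the leading $t-v_p(x-y)$ digits of $x-y$, and in the cancellation case $v_p(x-y)=v_p(y)$ together with $v_p(x)\le m-1$ gives $t-v_p(x-y)=v_p(x)-v_p(y)+1\le m-v_p(y)$, which is exactly the asserted bound (the other cases need only one digit, as you note). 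As written, though, you only ``expect'' your folding induction to close and never carry it out, so the one case that actually produces the exponent $m-v_p(y)$ is left unproved; with the one-line choice of ball above the proof is complete, without it it is not.
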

\begin{proof}
    Let $x'\in\cup_{i=0}^{m-1}p^i\mathbb{Z}_p^\times$ such that $x'$ and $x$ share the leading $(m-v_p(y))$ many $p$-adic digits. i.e. there exists a unit $u$, such that $x'=y+u(x-y)=ux+(1-u)y$. Note that $|x'|=|x|$, and $(1-u)y\in p^m\mathbb{Z}_p$, so any of the $p^i\mathbb{Z}_p^\times$ is invariant under the translation by $(1-u)y$. Denote $G_2(x,y)=G(x',y)$. Let $z'=uz+(1-u)y$, then we have
    \begin{align*}
    DG_2(x,y)&=\int_E \frac{G_2(z,y)-G_2(x,y)}{|z-x|^2}|x|\,dz\\
    &=\int_E \frac{G(z',y)-G(x',y)}{|z-x|^2}|x|\,dz\\
    &=\int_E \frac{G(z',y)-G(x',y)}{|z'-x'|^2}|x'|\,dz'\\
    &=\delta(x',y)-\frac{1}{V}\\
    &=\delta(x',y)-\frac{1}{V}
    \end{align*}
    Thus $G(x',y)$ is a Green's function for our fixed $y$, thus as a function of $x$, it differs from $G(x,y)$ by adding a constant, which obviously has to be zero.
\end{proof}

Consider the projection: $\pi_k: S/(1+p^{k+1}\mathbb{Z}_p)\to S/(1+p^{k}\mathbb{Z}_p)$. 

Let $G_k$ denote a Green's function with respect to $k$. Define integration along the fiber
\begin{equation*}
    \tilde{G}_k(x,y):= \frac{1}{p^2}\sum_{x': \pi_k(x')=x, y': \pi_k(y')=y} G_{k+1}(x',y')
\end{equation*}

\begin{lemma}
   $\tilde{G}_k(x,y)$ is a Green's function for $E_k$. 
\end{lemma}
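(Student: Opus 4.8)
The plan is to verify the Green's equation $D_k\tilde G_k(\cdot,y)=\delta_y-\tfrac1V$ on $E_k$ for each fixed $y\in E_k$ by showing that $D$ commutes with integration along the fibers of $\pi_k$, and then reducing to the Green's equation $D_{k+1}G_{k+1}(\cdot,y')=\delta_{y'}-\tfrac1V$ on $E_{k+1}$, which we are given. To organize this, write $\pi_k^\ast$ for the pullback of functions (embedding functions on $E_k$ as conductor-$\le k$ functions on $E_{k+1}$), and write $P$ for fiber averaging, $(P\Phi)(x):=\tfrac1p\sum_{x'\colon\pi_k(x')=x}\Phi(x')$. With this notation the definition of $\tilde G_k$ reads $\tilde G_k(\cdot,y)=P\Psi_y$, where $\Psi_y:=\tfrac1p\sum_{y'\colon\pi_k(y')=y}G_{k+1}(\cdot,y')$ is a function on $E_{k+1}$.

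I would first record two ingredients. The first is that $P$ is the adjoint of $\pi_k^\ast$ with respect to the pairings $\langle\,\cdot,\cdot\,\rangle$ at the two levels; this is a one-line computation using the measure scaling $\mu^\times(\{x'\})=\tfrac1p\,\mu^\times(\{x\})$ for $x'$ in the fiber over $x$ (each point of $E_k$ splits into $p$ points of one $p$-th the volume). The second is the conductor-preservation of $D$ recorded above: applying $D$ to a function depending only on the first $k$ digits again produces such a function, which in the present notation is exactly the relation $D_{k+1}\pi_k^\ast=\pi_k^\ast D_k$. Dualizing this identity and invoking the self-adjointness of $D$ at both levels then yields the intertwining relation $P\,D_{k+1}=D_k\,P$.

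Granting these, the computation is immediate. Using the intertwining and then the linearity of $D_{k+1}$,
\[
D_k\tilde G_k(\cdot,y)=D_k P\Psi_y=P\,D_{k+1}\Psi_y=P\Big(\tfrac1p\sum_{y'\colon\pi_k(y')=y}\big(\delta_{y'}-\tfrac1V\big)\Big).
\]
The constant term pushes forward to itself, $P(\tfrac1V)=\tfrac1V$, so it remains to treat the delta terms. Here I would check that each fiber delta pushes to the base delta, $P(\delta_{y'})=\delta_y$: the factor $1/p$ defining $P$ exactly cancels the factor $p$ coming from $\mu^\times(\{y'\})=\tfrac1p\,\mu^\times(\{y\})$ in the normalization of $\delta_{y'}$. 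Averaging the $p$ equal contributions then gives $\tfrac1p\sum_{y'}P(\delta_{y'})=\delta_y$, and hence $D_k\tilde G_k(\cdot,y)=\delta_y-\tfrac1V$, which is precisely the Green's equation on $E_k$.

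The only genuine work, and the step to be careful about, is the bookkeeping of normalizations: confirming that $P$ really is the adjoint of $\pi_k^\ast$ under the scaling $\mu^\times(\{x'\})=\tfrac1p\,\mu^\times(\{x\})$, and that with $\delta_{y'}$ normalized so that $\int\delta_{y'}f\,d\mu^\times=f(y')$ one obtains $P(\delta_{y'})=\delta_y$ with no leftover power of $p$. I would also note explicitly that the total volume $V$ of $E_q$ is independent of $k$, so that the same constant $\tfrac1V$ appears on both levels and the final equation has the correct form.
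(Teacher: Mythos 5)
Your proof is correct, and the overall strategy is the same as the paper's: reduce the Green's equation for $\tilde G_k$ to the known Green's equation for $G_{k+1}$ on $E_{k+1}$ and track the normalization of the delta and the constant under fiber averaging. The one place you go beyond the paper is the intertwining relation $P\,D_{k+1}=D_k\,P$: the paper's proof simply writes $\int_E\phi(y)D\tilde G_k(x,y)\,d^*y=\frac{1}{p^2}\sum_{x',y'}\int_E\phi\,DG_{k+1}(x',y')\,d^*y$ as its first equality, i.e.\ it takes for granted that applying $D$ commutes with averaging over the lifts $x'$ of $x$, whereas you derive this from the adjointness of $P$ and $\pi_k^\ast$ together with self-adjointness and conductor preservation of $D$ (both established earlier in the paper). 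This is a genuine, and welcome, tightening: since the sum defining $\tilde G_k(x,y)$ ranges over lifts that depend on $x$, the commutation is not a formal consequence of linearity and does require the argument you give. The remaining bookkeeping --- $d^*y=p\,d^*y'$ so that $P\delta_{y'}=\delta_y$, constants pushing forward to themselves, and $V$ being independent of $k$ --- matches the paper's computation exactly.
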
 
\begin{proof}
Let $\phi$ be a test function on $E_k$. Then 
\begin{align*}
    \int_E \phi(y) D\tilde{G}_k(x,y)\,dy
    &=\frac{1}{p^2}\sum_{x': \pi_k(x')=x, y': \pi_k(y')=y}\int_E \phi(y) DG_{k+1}(x',y')\,dy\\
    &=\frac{1}{p^2}\sum_{x': \pi_k(x')=x, y': \pi_k(y')=y}\int_E \delta(x',y')\phi(y)p\,dy'-\frac{1}{V}\phi(y)\\
    &=\frac{1}{p^2}\sum_{x': \pi_k(x')=x}p\phi(x')-\frac{1}{V}\phi(y)\\
    &=\phi(x)-\frac{1}{V}\phi(y)\\
    &=(\delta(x,y)-\frac{1}{V})\phi(y)
\end{align*}
Since for any fixed lift $y'$ of $y$, $dy=pdy'$, and for any lift $x'$ of $x$, there exists exactly one lift $y'$ of $y$, whose $(k+1)$-th digit matches with that of $x'$. 

\end{proof}

Therefore, integration along fiber maps a symmetric Green's function to a symmetric Green's function. 

Next, observe that all the Green's equations on finite quotients $E_k$ are matrix equations over $\mathbb{Q}$. In particular, there are real symmetric solutions. We further uniformize these real symmetric solutions by requiring that the maximum value of any of the solutions is $0$. This fixes a unique solution for each $k$, which by abuse of notation, we still denote by $G_k$.

Combined with Lemma \ref{digits}, we deduce that for any $x\neq y$ points of $E$, when $k$ is big enough, 
\begin{equation}
    \tilde{G}_k(x,y)={G}_{k+1}(x,y)
\end{equation}

In the above equation, we again used abuse of notation, where arguments on both sides are really given by projections from $E$.

So $\tilde{G}_k(x,y)$ is a real symmetric Green's function on $E_k$ with maximum value $0$, thus $\tilde{G}_k(x,y)=G_k(x,y)$. As a consequence, the limit of $G_k(x,y)$ exists for any $x\neq y$.

Denote this limit by $G(x,y)$. We next show that $G(x,y)$ is a real symmetric Green's function on the Tate curve:

\begin{lemma}
The above constructed $G(x,y)$ is a real symmetric Green's function on the Tate curve. I.e., as a distribution on the space of continuous functions on $E$, $DG(x,y)=\delta(x,y)-\frac{1}{V}$.
\end{lemma}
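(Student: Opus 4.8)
The plan is to verify, for every test function $\phi$, the distributional identity $\int_E G(x,y)\,D\phi(x)\,d\mu^\times(x)=\phi(y)-\frac1V\int_E\phi\,d\mu^\times$, which by the self-adjointness of $D$ is exactly the meaning of $DG(x,y)=\delta(x,y)-\frac1V$. Since $D$ is only defined on locally constant functions, and the functions descending to some finite quotient $E_{k_0}$ are dense in $C(E)$ while the target $\delta(x,y)-\frac1V$ is a finite measure, it is enough to establish the identity for $\phi$ locally constant, say of conductor $k_0$. One should first record that $G(\cdot,y)\in L^1(d\mu^\times)$, so that the left-hand pairing is even well defined; I expect this to follow from the logarithmic growth of $G$ near the diagonal together with the geometric decay of $\mu^\times\{v_p(x-y)=\ell\}$.

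Fix such a $\phi$. Because $D$ preserves the conductor, $D\phi$ also descends to $E_{k_0}$. For every $k\ge k_0$ the function $G_k(\cdot,y)$ solves the matrix Green's equation on $E_k$, so pairing against $\phi$ and using self-adjointness yields the exact identity $\int_E G_k(x,y)\,D\phi(x)\,d\mu^\times(x)=\phi(y)-\frac1V\int_E\phi\,d\mu^\times$, whose right-hand side is independent of $k$. The whole statement therefore reduces to the single limit $\int_E G_k(x,y)\,D\phi(x)\,d\mu^\times\to\int_E G(x,y)\,D\phi(x)\,d\mu^\times$ as $k\to\infty$.

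To take this limit I would partition $E$ into the level-$k_0$ cells on which $D\phi$ is constant. On each cell $c$ not containing $y$ the points stay at bounded $p$-adic distance from $y$, so by the leading-digit lemma and the stabilization $G_k(x,y)=G(x,y)$ already established for large $k$, the convergence is in fact eventual equality, uniformly on $c$; these cells contribute correctly in the limit. On the single cell $c_0$ containing $y$ one has $D\phi\equiv D\phi(y)$, so everything reduces to showing $\int_{c_0}G_k(x,y)\,d\mu^\times\to\int_{c_0}G(x,y)\,d\mu^\times$.

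This diagonal term is the main obstacle. Since $G_k(\cdot,y)$ and $G(\cdot,y)$ already agree outside a ball $B_k$ about $y$ whose measure tends to $0$, the difference of integrals equals $\int_{B_k}(G_k-G)\,d\mu^\times$, and the task is to justify interchanging the limit with the integral across the diagonal. The clean way is to produce an integrable dominating function, namely a uniform-in-$k$ bound of the form $|G_k(x,y)|\le a+b\,v_p(x-y)$ valid near $y$. Granting such a logarithmic bound, the shell $\{v_p(x-y)=\ell\}$ contributes at most $(a+b\ell)\,\mu^\times\{v_p(x-y)=\ell\}$, which decays geometrically in $\ell$, so the tails over $B_k$ are uniformly small and dominated convergence applies. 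The real work is thus the a priori estimate giving this bound; I expect it to come either from the explicit near-diagonal log singularity made precise in Section 3, or from an a priori lower bound $G_k(y,y)\gtrsim -k$ extracted from the maximum-value-$0$ normalization together with the negative semidefiniteness of $D$ and the resulting maximum principle on $E_k$.
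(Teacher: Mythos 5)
Your proposal follows the same skeleton as the paper's proof: reduce by density to test functions $\phi$ descending to some finite quotient $E_{k_0}$, use the exact matrix Green's equations for $G_k$ on each $E_k$ together with self-adjointness of $D$, and pass to the limit $k\to\infty$ using the stabilization $G_k(x,y)=G(x,y)$ away from the diagonal. The difference is one of completeness rather than of route: the paper's proof is a two-line appeal to density plus ``follows from the definition of $G$,'' whereas you isolate the one genuinely analytic step hiding behind that phrase, namely the interchange of the limit with the integral over the single level-$k_0$ cell containing $y$. That is the right place to focus, and your reduction of everything else to eventual equality on the off-diagonal cells is correct.

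The one incomplete point is that the dominating bound $|G_k(x,y)|\le a+b\,v_p(x-y)$ near the diagonal is asserted, not proved; as you note, deriving it from the Section 3 expansion would be circular in the paper's logical order, and the maximum-principle route is not spelled out. Two remarks that make this easy to close. First, since $G_k(\cdot,y)$ is constant on the level-$k$ cell of $y$ and that cell has $\mu^\times$-measure $O(p^{-k})$, the error term you must kill is just $G_k(y,y)\cdot O(p^{-k})$, so the far weaker estimate $|G_k(y,y)|=o(p^k)$ already suffices --- a crude bound extracted directly from the finite-dimensional Green's equation (or from the fiber-integration identity $\tilde G_k=G_k$, which forces the integrals of $G_k$ over fixed level-$k_0$ cells to be constant in $k$ and hence, with $G_k\le 0$, bounds the near-diagonal mass) does the job. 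Second, granting that, your dominated-convergence framework is more than is needed; a one-sided Fatou argument plus the constancy of those cell integrals gives both $G(\cdot,y)\in L^1(d\mu^\times)$ and the desired equality. So the proposal is correct in strategy and, once the elementary bound on $G_k(y,y)$ is supplied, is actually a more rigorous proof than the one in the paper.
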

\begin{proof}
Again, functions descending to a finite quotient $E_k$ are dense in the space of continuous functions on the compact $E$. So it suffices to check the Green's equation for each $E_k$, which follows from the definition of $G(x,y)$.
\end{proof}

We therefore have proved
\begin{theorem}\label{uniqueness}
There is a unique symmetric Green's function on the Tate curve, up to the addition of a constant.
\end{theorem}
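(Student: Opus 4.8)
The plan is to assemble the theorem from the machinery already in place, splitting it cleanly into an existence claim and a uniqueness claim. Existence I would take directly from the explicit inverse-limit construction: for each conductor $k$ the Green's equation is the finite matrix equation $DG_k=\delta-\tfrac{1}{V}$ on $E_k$, normalized so that $\max G_k=0$, and the candidate Green's function on the Tate curve is the pointwise limit $G(x,y):=\lim_{k\to\infty}G_k(x,y)$ taken at each off-diagonal pair. Uniqueness I would extract from the first corollary together with the finite-quotient uniqueness lemma. So the real content of the proof is verifying that the limit is well-defined and that it defines a distribution solving the Green's equation, after which the rigidity already established closes the argument.

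For existence I would argue as follows. Fix $x\neq y$ in $E$ (more precisely their images in the various $E_k$). The normalization $\max G_k=0$ pins down a unique real symmetric solution at each level $k$. The decisive step is stabilization: integration along the fiber $\tilde{G}_k$ is again a symmetric Green's function on $E_k$ and inherits the maximum-value-$0$ normalization, so finite-quotient uniqueness forces $\tilde{G}_k=G_k$; on the other hand, for $k$ large relative to the pair $(x,y)$ the fiber sum collapses to give $\tilde{G}_k(x,y)=G_{k+1}(x,y)$. Combining the two yields $G_k(x,y)=G_{k+1}(x,y)$ for all large $k$, so the sequence is eventually constant and $G(x,y)$ exists. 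The last lemma then certifies that this limit, read as a distribution, satisfies $DG=\delta-\tfrac{1}{V}$, using density of functions descending to finite quotients in the continuous functions on the compact set $E$.

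For uniqueness I would invoke the corollary directly. Any symmetric Green's function on the Tate curve projects to a symmetric Green's function on each $E_k$; the finite-quotient lemma makes those unique up to an additive constant, and density upgrades this to uniqueness up to a constant on the whole curve. Since the $G$ built above is symmetric (each $G_k$ is symmetric and symmetry passes to the limit), it is the asserted object, and the theorem follows by combining the two claims.

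I expect the genuine obstacle to be the convergence argument, namely confirming that $G_k(x,y)$ truly stabilizes and that passing to the limit commutes with the distributional action of $D$. The normalization choice $\max G_k=0$ and the compatibility $\tilde{G}_k=G_k=G_{k+1}$ off the diagonal are precisely what remove the additive-constant ambiguity at each finite level and force honest pointwise convergence; the care needed is in checking that the threshold ``$k$ large enough'' can be arranged to define $G$ at \emph{every} off-diagonal point, and that the limiting distribution still annihilates constants correctly so that $\delta-\tfrac{1}{V}$, rather than $\delta$ alone, appears on the right-hand side.
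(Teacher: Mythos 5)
Your proposal is correct and follows essentially the same route as the paper: existence via the max-normalized finite-quotient solutions $G_k$, stabilization through the integration-along-the-fiber lemma forcing $\tilde{G}_k=G_k=G_{k+1}$ off the diagonal, and uniqueness via the finite-quotient lemma together with density of functions descending to the quotients. The paper states the theorem as the direct assembly of exactly these preceding lemmas, so nothing further is needed.
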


The Green's function on the Tate curve has the following symmetry, which is a $p$-adic counterpart of the obvious reflection symmetry in the Archimedean case:
\begin{prop}\label{double flip}
$G(x,y)=G(p^{m-1}x^{-1},p^{m-1}y^{-1})$
\end{prop}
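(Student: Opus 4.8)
The plan is to exhibit the map $\iota(x):=p^{m-1}x^{-1}$ as a measure-preserving involution of the Tate curve that intertwines with $D$, so that $\tilde G(x,y):=G(\iota(x),\iota(y))$ is again a symmetric Green's function; uniqueness then forces $\tilde G=G+c$, and a normalization argument gives $c=0$. First I would record the elementary properties of $\iota$. If $x\in p^s\mathbb{Z}_p^\times$ then $\iota(x)\in p^{m-1-s}\mathbb{Z}_p^\times$ with $0\le m-1-s\le m-1$, so $\iota$ maps the fundamental domain $E$ bijectively onto itself and satisfies $\iota\circ\iota=\mathrm{id}$. Tracking the additive Jacobian of $x\mapsto p^{m-1}x^{-1}$ gives $d(\iota(x))=p^{-(m-1)}|x|^{-2}\,dx$, while $|\iota(x)|=p^{-(m-1)}|x|^{-1}$; dividing shows $\iota$ preserves $\mu^\times$. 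Moreover $\iota$ descends to a permutation of each finite quotient $E_k=E_q/(1+p^k\mathbb{Z}_p)$, so the Dirac distribution obeys $\delta(\iota(x),\iota(y))=\delta(x,y)$, and the volume $V$ is unchanged.

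The heart of the argument is the identity $D(\phi\circ\iota)=(D\phi)\circ\iota$. Writing $x'=\iota(x)$, $w=\iota(z)$ (so $x=\iota(x')$, $z=\iota(w)$), one computes
\[
|z-x|=p^{-(m-1)}\frac{|w-x'|}{|w|\,|x'|},\qquad |x|=p^{-(m-1)}|x'|^{-1},\qquad dz=p^{-(m-1)}|w|^{-2}\,dw,
\]
and the three factors conspire to give the clean transformation of the kernel weight
\[
\frac{|x|}{|z-x|^2}\,dz=\frac{|x'|}{|w-x'|^2}\,dw.
\]
Since $z\mapsto w=\iota(z)$ is a bijection of $E$, substituting into the definition of $D$ and relabeling yields $D\tilde G(x,y)=DG(x',y')=\delta(x',y')-\tfrac1V=\delta(x,y)-\tfrac1V$, using the $\iota$-invariance of $\delta$. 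Because $G$ is symmetric, so is $\tilde G(x,y)=G(\iota(y),\iota(x))=\tilde G(y,x)$. Hence $\tilde G$ is a symmetric Green's function on the Tate curve.

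By the uniqueness of the symmetric Green's function up to an additive constant, $\tilde G=G+c$. To identify $c$ I would pass to the finite quotients: each $G_k$ is normalized to have maximum value $0$, and since $\iota$ acts as a permutation of $E_k\times E_k$, the composite $G_k\circ(\iota\times\iota)$ takes exactly the same set of values as $G_k$, hence also has maximum $0$; comparing with $G_k+c_k$ forces $c_k=0$. Letting $k\to\infty$ gives $G(x,y)=G(p^{m-1}x^{-1},p^{m-1}y^{-1})$. I expect the only genuinely delicate step to be the intertwining computation above, namely verifying that the non-local kernel $|x|/|z-x|^2$ together with the additive measure is preserved in form under the inversion $\iota$; once that cancellation is in hand, everything else is bookkeeping layered on top of the uniqueness statement and the maximum-value normalization established earlier.
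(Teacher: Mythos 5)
Your proposal is correct and follows essentially the same route as the paper: substitute $x_1=p^{m-1}x^{-1}$, $z_1=p^{m-1}z^{-1}$ to show the kernel $|x|\,dz/|z-x|^2$ is preserved, conclude that $G(p^{m-1}x^{-1},p^{m-1}y^{-1})$ is again a symmetric Green's function, and then use the uniqueness together with the maximum-value-$0$ uniformization of the $G_k$ to kill the additive constant before passing to the limit. Your explicit justification that the constant vanishes (the inversion permutes $E_k\times E_k$, so the set of values, and hence the maximum, is unchanged) spells out what the paper compresses into ``by the uniformization of the $G_k$.''
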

\begin{proof}
Denote $G_1(x,y)=G(p^{m-1}x^{-1},p^{m-1}y^{-1})$. We show that $G_1(x,y)$ is a symmetric Green's function:
Substituting in $z_1=p^{m-1}z^{-1}$, $x_1=p^{m-1}x^{-1}$, and $y_1=p^{m-1}y^{-1}$, we have

\begin{align*}
DG_1(x,y)&=\int_E \frac{G(p^{m-1}z^{-1},p^{m-1}y^{-1})-G(p^{m-1}x^{-1},p^{m-1}y^{-1})}{|z-x|^2}|x|\,dz\\
&=\int_E\frac{(G(z_1,y_1)-G(x_1,y_1))|z_1x_1|^2}{p^{-2(m-1)}|z_1-x_1|^2}\left|\frac{p^{m-1}}{x_1}\right||p^{m-1}|\,\frac{dz_1}{|z_1|^2}\\
&=\delta(x_1,y_1)-\frac{1}{V}\\
&=\delta(x,y)-\frac{1}{V}
\end{align*}
Therefore $G_1(x,y)=G(x,y)$ by the uniqueness theorem \eqref{uniqueness}.

\end{proof}

\section{Determining \texorpdfstring{$B_{p,m}(x,y)$}{B\textunderscore p,m(x,y)}} \label{sec:DeterminingB}

\subsection{\texorpdfstring{$D$, $G$, $B$, and $C$}{D, G, B, and C}}

Recall that
\begin{equation}
    D\phi(x,y)=\int_{\bigcup_{s=0}^{m-1}p^s\mathbb{Z}_p^\times}\dfrac{\phi(z,y)-\phi(x,y)}{|z-x|^2}|x|\,dz\label{E:dphixy}
\end{equation}
and define
\begin{equation}
    d(x,y)=\dfrac{|x-y|}{\max\{|x|,|y|\}}
\end{equation}
We prove that $G_{p,m}(x,y)=B_{p,m}(x,y)+C_{p,m}(x,y)$, where $B_{p,m}(x,y)$ is an infinite series in $d(x,y)$, depending only on $|x|$, $|y|$, and $|x-y|$ (i.e., only on $v_p(x)$, $v_p(y)$, and $v_p(x-y)$), with a log singularity on the diagonal; $C_{p,m}(x,y)$ depends only on $|x|$ and $|y|$; and both $B_{p,m}(x,y)$ and $C_{p,m}(x,y)$ are symmetric. In the $m=1$ case, $G_{p,m}=B_{p,m}$; $C_{p,m}$ is a `correction' to $B_{p,m}$ to account for the dependence of $DB_{p,m}(x,y)$ on $|x|$ and $|y|$. Note that both $B_{p,m}$ and $C_{p,m}$ are locally constant off the diagonal.

The following lemma is key in proving $G_{p,m}=B_{p,m}+C_{p,m}$:

\begin{lemma}\label{GOffDiagLemma}
Suppose $G(x,y)$ is a locally constant function on $E\times E$ off the diagonal, with a log singularity at the diagonal, and suppose $DG(x,y)=-\frac{1}{V}$ off the diagonal; then $G$ defines a distribution on the space of continuous functions on the Tate curve, and $G(x,y)$ satisfies $DG(x,y)=\delta(x,y)-\frac{1}{V}$. I.e., $G(x,y)$ is a Green's function for $D$. 

\end{lemma}
\begin{proof}
It suffices to prove the result for the space of locally constant functions on the Tate curve, as they are dense in the space of continuous functions.

First, it was proven in Lemma \ref{DPresLCLemma} that $D$ preserves locally constant functions. Next, for a function $G$ satisfying the above assumptions, $G$ defines a distribution on the space of locally constant functions, by specifying the integration kernel to be $G$. The integrals converge as a result of the log singularity of $G$. Therefore, as $D$ is self-adjoint, $DG$ is again a distribution on the space of locally constant functions. Fix any $y$. By the above assumptions on $G$, $DG(x,y)+\frac{1}{V}$ is a distribution supported at the single point $y$. On the other hand, the space of function germs at $y$ is 1-dimensional, since we are working with locally constant functions. Therefore, there is a constant $c_y$ such that $DG(x,y)=c_y\delta(x,y)-\frac{1}{V}$. Integrating both sides with respect to $x$ over $E$, as $D$ is self-adjoint, and as the constant function $1$ is in the kernel of $D$, the integral on the left hand side gives zero. From this we deduce that $c_y=1$.
\end{proof}

By Lemma \ref{GOffDiagLemma}, to prove $G_{p,m}=B_{p,m}+C_{p,m}$, it suffices to prove that $DB_{p,m}+DC_{p,m}=-\frac{1}{V}$ when $x\neq y$. The following explains our idea to find such $B_{p,m}$ and $C_{p,m}$: As $C_{p,m}(x,y)$ can only depend on $|x|$ and $|y|$, the same must hold for $DC_{p,m}(x,y)$. So if $G_{p,m}=B_{p,m}+C_{p,m}$, it follows that the same must hold for $DB_{p,m}(x,y)$ as well. So, when $x\neq y$, $DB_{p,m}(x,y)$ cannot depend on $|x-y|$. Thus, if we first find $B_{p,m}$ such that (for $x\neq y$) $DB_{p,m}$ depends only on $|x|$ and $|y|$, and then find $C_{p,m}$ such that $DB_{p,m}+DC_{p,m}=-\frac{1}{V}$, then $B_{p,m}+C_{p,m}$ must (up to a constant) be the unique symmetric Green's function. In section \ref{sec:DeterminingB}, we find $B_{p,m}$ such that $DB_{p,m}$ depends only on $|x|$ and $|y|$; in section \ref{subsec:CExistence}, we prove the existence of $C_{p,m}$, which finishes the proof that, up to a constant, $G_{p,m}=B_{p,m}+C_{p,m}$ (section \ref{subsec:ComputingC} concerns other properties of $C_{p,m}$, including explicit recursive formulas for $C_{p,m}(x,y)$).

Note that $d(x,y)=1$ when $|x|\neq|y|$. We shall prove that, up to a constant,
\begin{equation}
    \label{E:Bpmxy}
    B_{p,m}(x,y)=A(y)+\lambda_0\log(d(x,y))+\sum_{n=1}^{\infty}\lambda_n(y)\,d(x,y)^n
\end{equation}
where $\lambda_0\in\mathbb{R}$ is a constant and $A(y),\lambda_n(y)\in\mathbb{R}$ depend only on $|y|$.\footnote{It is clear from \eqref{E:dphixy} that adding a function of $y$ to $\phi(x,y)$ does not affect $D\phi(x,y)$; $A(y)$ serves to symmetrize $B_{p,m}(x,y)$.} We compute $D\log(d(x,y))$ and $Dd(x,y)^n$, and then use the requirement that $DB_{p,m}(x,y)$ cannot depend on $|x-y|$ (disregarding $x=y$) to determine the $\lambda_n(y)$ for $n>0$. Put in another way, it turns out that there exist coefficients $\lambda_n(y)$ such that $DB_{p,m}(x,y)$ does not depend on $|x-y|$.\footnote{Separately computing $D$ for each `term' yields $DB_{p,m}(x,y)$ in the form of an infinite sum, and we prove in section \ref{subsec:DBAbsConv} that the sum is absolutely convergent. Thus, computing each term separately gives the same result as computing $DB_{p,m}(x,y)$ without first separating $B_{p,m}(x,y)$ into different terms, and $DB_{p,m}(x,y)$ is well-defined.}

\subsection{The \texorpdfstring{$m=1$}{m=1} Case}

Aside from this subsection, we will usually take $m>1$. The $m=1$ case is similar to the $m>1$ case, and in many ways simpler (e.g., we can take $C_{p,1}(x,y)\equiv0$ and $A(y)=0$, we do not need different $\lambda_n$ for different $|y|$, $U(y)$ as defined in \eqref{E:Uy} is always equal to 2, and $d(x,y)$ becomes $|x-y|$). The computation of $DB_{p,m}(x,y)$ for the $m=1$ case largely follows the $v_p(x)=v_p(y)=v_p(z)$ portion of the $m>1$ case, and \eqref{E:DlogdxyA}, \eqref{E:DdxyA}, \eqref{E:DdxynA}, \eqref{E:lambdan}, \eqref{E:DBxyA}, and \eqref{E:lambda0} all hold for $m=1$.

\subsection{Terms of \texorpdfstring{$DB_{p,m}(x,y)$}{DB\textunderscore p,m(x,y)}}

\subsubsection{Computing \texorpdfstring{$D\log(d(x,y))$}{D log(d(x,y))}}

We first compute $D\log(d(x,y))$. For $x\neq y$,
\begin{equation}
    D\log(d(x,y))=\int_{\bigcup_{s=0}^{m-1}p^s\mathbb{Z}_p^\times}\dfrac{\log(d(z,y))-\log(d(x,y))}{|z-x|^2}|x|\,dz\label{E:Dlogdxy}
\end{equation}

Suppose $v_p(x)=q$ and $v_p(y)=r$ (i.e., $x\in p^q\mathbb{Z}_p^\times$ and $y\in p^r\mathbb{Z}_p^\times$) for $q\neq r$. Then, $\log(d(x,y))=0$, and $\log(d(z,y))=0$ unless $v_p(z)=r$, and the integrand in \eqref{E:Dlogdxy} vanishes unless $v_p(z)=r$. So, \eqref{E:Dlogdxy} becomes
\begin{equation}
    D\log(d(x,y))=\int_{p^r\mathbb{Z}_p^\times}\dfrac{r-v_p(z-y)}{\max\{p^{-2q},p^{-2r}\}}p^{-q}\,dz
\end{equation}
Define
\begin{equation}
    \partial y_\ell=\left\{z\in\bigcup_{s=0}^{m-1}p^s\mathbb{Z}_p^\times\,\middle|\,v_p(z)=v_p(y),v_p(z-y)=\ell\right\}
\end{equation}
Then, we can write $p^r\mathbb{Z}_p^\times\setminus\{y\}=\bigcup_{i=r}^{\infty}\partial y_i$. Note that $\mu^+(\partial y_r)=(p-2)p^{-r-1}$ and $\mu^+(\partial y_i)=(p-1)p^{-i-1}$ for $i>r$.
So, for $v_p(x)=q\neq r=v_p(y)$,
\begin{align}
    D\log(d(x,y))&=\int_{\bigcup_{i=r}^{\infty}\partial y_i}\dfrac{r-v_p(z-y)}{\max\{p^{-2q},p^{-2r}\}}p^{-q}\,dz\\
    &=\sum_{i=r}^{\infty}\int_{\partial y_i}\dfrac{r-v_p(z-y)}{\max\{p^{-2q},p^{-2r}\}}p^{-q}\,dz\nonumber\\
    &=\sum_{i=r+1}^{\infty}\dfrac{r-i}{\max\{p^{-2q},p^{-2r}\}}p^{-q}(p-1)p^{-i-1}\label{E:Dlogdqr}
\end{align}
Evaluating \eqref{E:Dlogdqr}, we obtain that, for $v_p(x)=q\neq r=v_p(y)$,
\begin{equation}
    D\log(d(x,y))=-\dfrac{p^{-|q-r|}}{p-1}\label{E:DlogdqrA}
\end{equation}

Suppose $v_p(x)=v_p(y)=r$ (i.e., $x,y\in p^r\mathbb{Z}_p^\times$) and $v_p(x-y)=\ell$. As $\mu^+(p^s\mathbb{Z}_p^\times)=(p-1)p^{-s-1}$, for $s\neq r$ we have
\begin{align}
    \int_{p^s\mathbb{Z}_p^\times}\dfrac{\log(d(z,y))-\log(d(x,y))}{|z-x|^2}|x|\,dz&=\int_{p^s\mathbb{Z}_p^\times}\dfrac{\ell-r}{\max\{p^{-2r},p^{-2s}\}}p^{-r}\,dz\nonumber\\
    &=\dfrac{\ell-r}{\max\{p^{-2r},p^{-2s}\}}(p-1)p^{-r-s-1}\label{E:Dlogdrrs}
\end{align}
We also have
\begin{equation}
    \int_{p^r\mathbb{Z}_p^\times}\dfrac{\log(d(z,y))-\log(d(x,y))}{|z-x|^2}|x|\,dz=\sum_{i=r}^{\infty}\int_{\partial x_i}\dfrac{\ell-v_p(z-y)}{p^{-2i}}p^{-r}\,dz\label{E:Dlogdrr1}
\end{equation}
If $i>\ell$ (i.e., $|z-x|<|x-y|$), then $|x-y|=|z-y|$ and the integrand in the right hand side of \eqref{E:Dlogdrr1} vanishes on $\partial x_i$. If $i<\ell$, then $v_p(z-y)=i$ for $z\in\partial x_i$. Thus, the right hand side of \eqref{E:Dlogdrr1} becomes
\begin{equation}
    \sum_{i=r}^{\ell-1}\int_{\partial x_i}\dfrac{\ell-i}{p^{-2i}}p^{-r}\,dz+\int_{\partial x_\ell}\dfrac{\ell-v_p(z-y)}{p^{-2\ell}}p^{-r}\,dz\label{E:Dlogdrr2}
\end{equation}
As $v_p(x-y)=\ell$, we can write $\partial x_\ell=(\partial y_\ell\cap\partial x_\ell)\cup\left(\bigcup_{i=\ell+1}^{\infty}\partial y_i\right)$. As the integrand in the second term of \eqref{E:Dlogdrr2} vanishes on $\partial y_\ell\cap\partial x_\ell$, \eqref{E:Dlogdrr2} becomes
\begin{align}
    &\sum_{i=r}^{\ell-1}\int_{\partial x_i}\dfrac{\ell-i}{p^{-2i}}p^{-r}\,dz+\sum_{i=\ell+1}^{\infty}\int_{\partial y_i}\dfrac{\ell-i}{p^{-2\ell}}p^{-r}\,dz\nonumber\\
    ={}&\dfrac{p-2}{p}(\ell-r)+\sum_{i=r+1}^{\ell-1}(\ell-i)(p-1)p^{i-r-1}+\sum_{i=\ell+1}^{\infty}(\ell-i)(p-1)p^{2\ell-i-r-1}\nonumber\\
    ={}&-\dfrac{1}{p-1}-\dfrac{2}{p}(\ell-r)\label{E:Dlogdrrr}
\end{align}
Combining \eqref{E:Dlogdrrs} and \eqref{E:Dlogdrrr}, we obtain (for $v_p(x)=v_p(y)=r$ and $v_p(x-y)=\ell$)
\begin{align}
    &\int_{\bigcup_{s=0}^{m-1}p^s\mathbb{Z}_p^\times}\dfrac{\log(d(z,y))-\log(d(x,y))}{|z-x|^2}|x|\,dz\nonumber\\
    ={}&\sum_{s=0}^{r-1}(\ell-r)(p-1)p^{s-r-1}-\dfrac{1}{p-1}-\dfrac{2}{p}(\ell-r)+\sum_{s=r+1}^{m-1}(\ell-r)(p-1)p^{r-s-1}\nonumber\\
    ={}&-\dfrac{1}{p-1}-\left(p^{r-m}+p^{-r-1}\right)(\ell-r)\label{E:DlogdrrA}
\end{align}

Define
\begin{equation}
    U(y)=p^{-v_p(y)}+p^{-m+v_p(y)+1}=|y|+p^{1-m}|y|^{-1}\label{E:Uy}
\end{equation}
(we will sometimes write $U(y)$ as $U(r)$ for $r=v_p(y)$).

Taken together, \eqref{E:DlogdqrA} and \eqref{E:DlogdrrA} give
\begin{equation}
    D\log(d(x,y))=\begin{cases}
        -\dfrac{p^{-|v_p(x)-v_p(y)|}}{p-1} & v_p(x)\neq v_p(y)\\[15pt]
        -\dfrac{1}{p-1}-\dfrac{U(y)}{p}(v_p(x-y)-v_p(y)) & v_p(x)=v_p(y)
    \end{cases}\label{E:DlogdxyA}
\end{equation}

\subsubsection{\texorpdfstring{Computing $Dd(x,y)^n$}{Dd(x,y)\^{}n} for \texorpdfstring{$n\geq1$}{n\textgreater=1}}

The computation of $Dd(x,y)^n$ for $n\geq1$ is similar to $D\log(d(x,y))$; the full computations are shown in appendix A.

We obtain
\begin{equation}
    Dd(x,y)=\begin{cases}
        -\dfrac{p^{-|v_p(x)-v_p(y)|}}{p+1} & v_p(x)\neq v_p(y)\\[15pt]
        \begin{aligned}
            -\dfrac{1}{p+1}-\dfrac{U(y)}{p}\left(1-p^{-(v_p(x-y)-v_p(y))}\right)\\
            +\dfrac{p-1}{p}(v_p(x-y)-v_p(y))
        \end{aligned} & v_p(x)=v_p(y)
    \end{cases}\label{E:DdxyA}
\end{equation}

And, for $n>1$,
\begin{equation}
    Dd(x,y)^n=\begin{cases}
        -\dfrac{p^n-1}{p^{n+1}-1}p^{-|v_p(x)-v_p(y)|} & v_p(x)\neq v_p(y)\\[15pt]
        \begin{aligned}
            \dfrac{p^{n-1}-p^{-1}}{p^{n-1}-1}-\dfrac{U(y)}{p}\left(1-p^{-n(v_p(x-y)-v_p(y))}\right)\\
            -\dfrac{(p+1)(p^n-1)^2}{p(p^{n-1}-1)(p^{n+1}-1)}p^{(1-n)(v_p(x-y)-v_p(y))}
        \end{aligned} & v_p(x)=v_p(y)
    \end{cases}\label{E:DdxynA}
\end{equation}

\subsection{Computing the \texorpdfstring{$\lambda_n(y)$}{lambda\textunderscore n(y)}} \label{subsec:lambdan}

Recall that (disregarding $x=y$) $DB_{p,m}(x,y)$ can depend only on $v_p(x)$ and $v_p(y)$; however, each `term' of $DB_{p,m}(x,y)$ has an explicit dependence on $v_p(x-y)$ when $v_p(x)=v_p(y)$. So, the $\lambda_n(y)$ must be such that this dependence is eliminated. Examining \eqref{E:DlogdxyA}, \eqref{E:DdxyA}, and \eqref{E:DdxynA}, we see that we must have $\lambda_1(y)=U(y)\dfrac{1}{p-1}\lambda_0$, and for $n>1$,
\begin{equation}
    \lambda_n(y)=U(y)\dfrac{\left(p^{n-1}-1\right)\left(p^{n+1}-1\right)}{(p+1)(p^n-1)^2}\lambda_{n-1}(y)
\end{equation}
Equivalently, for $n>0$,
\begin{equation}
    \lambda_n(y)=U(y)^n\dfrac{p^{n+1}-1}{(p-1)(p+1)^n(p^n-1)}\lambda_0\label{E:lambdan}
\end{equation}
We set
\begin{equation}
    A(y)=-\sum_{n=1}^{\infty}\lambda_n(y)\label{E:Ay}
\end{equation}
so that $B_{p,m}(x,y)$ is symmetric for $v_p(x)\neq v_p(y)$ (we then have $B_{p,m}(x,y)\equiv0$ when $v_p(x)\neq v_p(y)$).

\subsection{Computing \texorpdfstring{$DB_{p,m}(x,y)$}{DB\textunderscore p,m(x,y)}} \label{subsec:CompDB}

Recall
\begin{equation*}\tag{\ref{E:Uy}}
    U(y)=p^{-v_p(y)}+p^{-m+v_p(y)+1}=|y|+p^{1-m}|y|^{-1}
\end{equation*}
By abuse of notation, we sometimes denote $U(y)$ by $U(v_p(y))$.

Putting together \eqref{E:DlogdxyA}, \eqref{E:DdxyA}, \eqref{E:DdxynA}, and \eqref{E:lambdan}, when $v_p(x)\neq v_p(y)$ we have
\begin{align}
    DB_{p,m}(x,y)&=-\lambda_0p^{-|v_p(x)-v_p(y)|}\sum_{n=0}^{\infty}\dfrac{U(y)^n}{(p-1)(p+1)^n}\\
    &=-\lambda_0\dfrac{p^{-|v_p(x)-v_p(y)|}}{p-1}\dfrac{p+1}{p+1-U(y)}\label{E:DBqrA}
\end{align}

When $v_p(x)=v_p(y)=r$ (but $x\neq y$) and $v_p(x-y)=\ell$, we substitute in \eqref{E:lambdan}, eliminate terms that depend on $v_p(x-y)$, regroup terms by powers of $U(y)$, and simplify to obtain
\begin{align}
    &DB_{p,m}(x,y)\nonumber\\
    =&-\lambda_0\left[\dfrac{1}{p-1}+\dfrac{U(y)}{p}(\ell-r)\right]-\lambda_1(y)\left[\dfrac{1}{p+1}+\dfrac{U(y)}{p}\left(1-p^{-(\ell-r)}\right)-\dfrac{p-1}{p}(\ell-r)\right]\nonumber\\
    &-\sum_{n=2}^{\infty}\lambda_n(y)\left[-\dfrac{p^{n-1}-p^{-1}}{p^{n-1}-1}+\dfrac{U(y)}{p}\left(1-p^{-n(\ell-r)}\right)+\dfrac{(p+1)(p^n-1)^2}{p(p^{n-1}-1)(p^{n+1}-1)}p^{(1-n)(\ell-r))}\right]\nonumber\\
    =&-\lambda_0\left[\dfrac{1}{p-1}\right]-\lambda_0\left[\dfrac{U(y)}{p-1}\left(\dfrac{1}{p+1}+\dfrac{U(y)}{p}\right)\right]\nonumber\\
    &-\lambda_0\sum_{n=2}^{\infty}\left[U(y)^n\dfrac{p^{n+1}-1}{(p-1)(p+1)^n(p^n-1)}\left(-\dfrac{p^{n-1}-p^{-1}}{p^{n-1}-1}+\dfrac{U(y)}{p}\right)\right]\nonumber\\
    =&-\lambda_0\sum_{n=0}^{\infty}\dfrac{U(y)^n}{(p-1)(p+1)^n}\nonumber\\
    =&-\lambda_0\dfrac{1}{p-1}\dfrac{p+1}{p+1-U(y)}\label{E:DBrrA}
\end{align}

From \eqref{E:DBqrA} and \eqref{E:DBrrA}, whenever $x\neq y$,
\begin{equation}
    DB_{p,m}(x,y)=-\lambda_0\dfrac{p^{-|v_p(x)-v_p(y)|}}{p-1}\dfrac{p+1}{p+1-U(y)}\label{E:DBxyA}
\end{equation}

In section \ref{subsec:CExistence}, we prove
\begin{equation*}\tag{\ref{E:lambda0}}
    \lambda_0=\frac{p(p-1)}{p+1}
\end{equation*}

\subsection{Absolute convergence of \texorpdfstring{$DB_{p,m}(x,y)$}{DB\textunderscore p,m(x,y)}} \label{subsec:DBAbsConv}

Our arguments in sections \ref{subsec:lambdan} and \ref{subsec:CompDB} rely on $DB_{p,m}(x,y)$ being absolutely convergent for $x\neq y$; we now prove that this is indeed the case.

When we apply $D$ to $d(x,y)^n$ for any $n\geq1$, the resulting integrand is always bounded. Because the domain of integration has finite measure, all of the resulting integrals are absolutely convergent. For $D\log(d(x,y))$, our integrand is not always bounded, but the resulting integrals are still absolutely convergent, as
\begin{equation*}
    \mu^+\left(\left\{z\in\bigcup_{s=0}^{m-1}p^s\mathbb{Z}_p^\times\,\middle|\,\left|\dfrac{\log(d(z,y))-\log(d(x,y))}{|z-x|^2}\right|_\mathbb{R}>M\right\}\right)
\end{equation*}
decreases exponentially as $M$ increases. So, each `term' of $DB_{p,m}(x,y)$ is individually absolutely convergent.

Next, $|U(y)|_\mathbb{R}\leq 1+p^{1-m}\leq2$, so (for $n>0$)
\begin{equation*}
    |\lambda_n(y)|_\mathbb{R}\leq2^n\dfrac{p^{n+1}-1}{(p-1)(p+1)^n(p^n-1)}\leq\dfrac{2^n}{(p-1)(p+1)^{n-1}}\leq2^n3^{1-n}
\end{equation*}
When $v_p(x)\neq v_p(y)$, we have
\begin{equation*}
    |Dd(x,y)^n|_\mathbb{R}\xrightarrow[n\to\infty]{}p^{-|v_p(x)-v_p(y)|-1}\leq p^{-2}
\end{equation*}
so $DB_{p,m}(x,y)$ is absolutely convergent when $v_p(x)\neq v_p(y)$. When $v_p(x)=v_p(y)\neq v_p(x-y)$,
\begin{equation*}
    |Dd(x,y)^n|_\mathbb{R}\xrightarrow[n\to\infty]{}\left|\dfrac{p^{n-1}-p^{-1}}{p^{n-1}-1}-\dfrac{U(y)}{p}\right|_\mathbb{R}<\dfrac{p+1}{p}
\end{equation*}
and so $DB_{p,m}(x,y)$ is absolutely convergent here as well. When $v_p(x)=v_p(y)=v_p(x-y)$,
\begin{equation*}
    |Dd(x,y)^n|_\mathbb{R}\xrightarrow[n\to\infty]{}\left|\dfrac{p^{n-1}-p^{-1}}{p^{n-1}-1}-\dfrac{(p+1)(p^n-1)^2}{p(p^{n-1}-1)(p^{n+1}-1)}\right|_\mathbb{R}<\dfrac{(p+1)^2}{p^2}
\end{equation*}
and therefore $DB_{p,m}(x,y)$ is absolutely convergent whenever $x\neq y$.

\section{Properties of \texorpdfstring{$C_{p,m}(x,y)$}{C\textunderscore p,m(x,y)}}

We now investigate the properties of $C_{p,m}(x,y)$. First, we prove the existence of $C_{p,m}(x,y)$.

\subsection{Existence of \texorpdfstring{$C_{p,m}(x,y)$}{C\textunderscore p,m(x,y)}} \label{subsec:CExistence}

Recall that the Green's function satisfies
\begin{align}
DG(x,y)=\delta(x,y)-\frac{1}{V}
\label{S:eq1}
\end{align}
where $V=\mu^{\times}(\mathbb{Q}_{p}^{\times}/p^{m\mathbb{Z}})=\mu^{\times}(\bigcup_{k=0}^{m-1}p^{k}\mathbb{Z}_{p}^{\times})=m(1-p^{-1})$. \eqref{S:eq1} can be rewritten as
\begin{align}
DC_{p,m}(x,y)=-DB_{p,m}(x,y)-\frac{1}{m(1-p^{-1})}
\label{S:eq2}
\end{align}
From \eqref{E:DBxyA} we know that
\begin{align*}
DB_{p,m}(x,y)=-\frac{p+1}{p-1}\frac{|x||y|}{\text{max}\{|x|,|y|\}^2}\frac{\lambda_0}{p+1-|y|-p^{1-m}|y|^{-1}}
\end{align*}
when $x\neq y$.

As $C_{p,m}$ depends only on $i:=v_{p}(x)$ and $j:=v_{p}(y)$, we write $(\boldsymbol{C})_{ij}=c_{ij}=C_{p,m}(x,y)=C_{p,m}(v_{p}(x),v_{p}(y))$. For $v_{p}(y)=\ell$ with $0\leq\ell\leq m-1$, we have
\begin{align*}
&DC_{p,m}(x,y)\\
={}&|x|\int_{\mathbb{Q}_{p}^{\times}/p^{m\mathbb{Z}}}\frac{C_{p,m}(z,y)-C_{p,m}(x,y)}{|z-x|^2}\,dz \\
={}&|x|\sum_{k=0}^{m-1}\int_{p^{k}\mathbb{Z}_{p}^{\times}}\frac{C_{p,m}(z,y)-C_{p,m}(x,y)}{|z-x|^2}\,dz \\
={}&|x|\sum_{k=0,k\neq v_{p}(x)}^{m-1}\frac{c_{k\ell}-c_{v_{p}(x)\ell}}{\text{max}\{p^{-k},|x|\}^2}p^{-k}(1-p^{-1}) \\
={}&|x|(1-p^{-1})\left(\sum_{k=0}^{v_{p}(x)-1}p^{k}(c_{k\ell}-c_{v_{p}(x)\ell})+\sum_{k=v_{p}(x)+1}^{m-1}p^{-k}(c_{k\ell}-c_{v_{p}(x)\ell})|x|^{-2}\right) \\
={}&\sum_{k=0}^{v_{p}(x)-1}(p-1)|x|p^{k-1}c_{k\ell}+\frac{|x|+|x|^{-1}p^{1-m}-2}{p|x|}c_{v_{p}(x)\ell}+\sum_{k=v_{p}(x)+1}^{m-1}\frac{p-1}{|x|p^{k+1}}c_{k\ell}
\end{align*}
and we can solve this linear system to obtain $\boldsymbol{C}$.

We denote by $\boldsymbol{A}$ the coefficient matrix of $\boldsymbol{C}$ on the RHS of the above equation, and by $\boldsymbol{B}$ the right hand side of \eqref{S:eq2}, where $0\leq i,j\leq m-1$. Then the equation can be written as $\boldsymbol{A}\boldsymbol{C}=\boldsymbol{B}$, where
\begin{align*}
\boldsymbol{A}_{ij}=
\begin{cases} 
(1-p^{-1})p^{-|i-j|}, & i\neq j \\
p^{-1}(p^{-i}+p^{i+1-m}-2), & i= j 
\end{cases}\\
(DB_{p,m})_{ij}=-\frac{p^{-|i-j|}}{p+1-p^{-j}-p^{j+1-m}}\frac{p+1}{p-1}\lambda_0
\end{align*}

The kernel of $\boldsymbol{A}$ consists of constant functions. I.e., each column of $\boldsymbol{A}$ sums to zero. The solvability condition for this linear system is that the same holds for $\boldsymbol{B}$; that is,
\begin{align*}
\sum_{i=0}^{m-1}\left(\frac{p^{-|i-j|}}{p+1-p^{-j}-p^{j+1-m}}\frac{p+1}{p-1}\lambda_0-\frac{1}{m(1-p^{-1})}\right)=0,\ \forall\ 0\leq j\leq m-1
\end{align*}
this is equivalent to
\begin{equation}\label{E:lambda0}
    \lambda_0=\frac{p(p-1)}{p+1}
\end{equation}
and thus
\begin{align*}
(DB_{p,m})_{ij}=-\frac{p^{1-|i-j|}}{p+1-p^{-j}-p^{j+1-m}}
\end{align*}
On the other hand, with $\lambda_0$ as in \eqref{E:lambda0}, we can always find $C_{p,m}$ such that $DB_{p,m}+DC_{p,m}=-\frac{1}{V}$ away from the diagonal, and therefore $G_{p,m}=B_{p,m}+C_{p,m}$ is a Green's function.

\subsection{Computing \texorpdfstring{$C_{p,m}(x,y)$}{C\textunderscore p,m(x,y)}} \label{subsec:ComputingC}

The computation of $C_{p,m}$ is aided by a sequence of linear algebra lemmas over $\mathbb{R}$.
\begin{lemma}
\label{S:lemma1}
Suppose $\boldsymbol{A},\boldsymbol{B}$ are $n\times n$ centrosymmetric matrices with $\mathrm{rank}(\boldsymbol{A})\geq n-1$ and $\boldsymbol{Z}$ is an $n\times n$ symmetric matrix that satisfies $\boldsymbol{A}\boldsymbol{Z}=\boldsymbol{B}$. Then $\boldsymbol{Z}$ is centrosymmetric.
\end{lemma}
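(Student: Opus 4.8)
The plan is to encode centrosymmetry through the exchange (reversal) matrix $\boldsymbol{J}$, the $n\times n$ matrix with ones on the anti-diagonal and zeros elsewhere, which satisfies $\boldsymbol{J}^2=\boldsymbol{I}$ and $\boldsymbol{J}^T=\boldsymbol{J}$. A matrix $\boldsymbol{M}$ is centrosymmetric precisely when $\boldsymbol{J}\boldsymbol{M}\boldsymbol{J}=\boldsymbol{M}$, since $(\boldsymbol{J}\boldsymbol{M}\boldsymbol{J})_{ij}=\boldsymbol{M}_{n+1-i,\,n+1-j}$. With this reformulation the hypotheses read $\boldsymbol{J}\boldsymbol{A}\boldsymbol{J}=\boldsymbol{A}$, $\boldsymbol{J}\boldsymbol{B}\boldsymbol{J}=\boldsymbol{B}$, $\boldsymbol{Z}^T=\boldsymbol{Z}$, and $\boldsymbol{A}\boldsymbol{Z}=\boldsymbol{B}$, and the goal is to show $\boldsymbol{J}\boldsymbol{Z}\boldsymbol{J}=\boldsymbol{Z}$.

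First I would produce a second solution of the same linear system by conjugating. Using $\boldsymbol{A}=\boldsymbol{J}\boldsymbol{A}\boldsymbol{J}$ and $\boldsymbol{J}^2=\boldsymbol{I}$,
\[
\boldsymbol{A}(\boldsymbol{J}\boldsymbol{Z}\boldsymbol{J})=(\boldsymbol{J}\boldsymbol{A}\boldsymbol{J})(\boldsymbol{J}\boldsymbol{Z}\boldsymbol{J})=\boldsymbol{J}\boldsymbol{A}\boldsymbol{Z}\boldsymbol{J}=\boldsymbol{J}\boldsymbol{B}\boldsymbol{J}=\boldsymbol{B}=\boldsymbol{A}\boldsymbol{Z}.
\]
Hence the matrix $\boldsymbol{W}:=\boldsymbol{J}\boldsymbol{Z}\boldsymbol{J}-\boldsymbol{Z}$ satisfies $\boldsymbol{A}\boldsymbol{W}=\boldsymbol{0}$, so every column of $\boldsymbol{W}$ lies in $\ker\boldsymbol{A}$. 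Two structural facts about $\boldsymbol{W}$ will drive the argument: since $\boldsymbol{J}^T=\boldsymbol{J}$ and $\boldsymbol{Z}^T=\boldsymbol{Z}$, the matrix $\boldsymbol{J}\boldsymbol{Z}\boldsymbol{J}$ is symmetric and therefore so is $\boldsymbol{W}$; and a direct computation gives $\boldsymbol{J}\boldsymbol{W}\boldsymbol{J}=\boldsymbol{Z}-\boldsymbol{J}\boldsymbol{Z}\boldsymbol{J}=-\boldsymbol{W}$, so $\boldsymbol{W}$ is skew-centrosymmetric.

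The rank hypothesis now enters through $\dim\ker\boldsymbol{A}\leq1$. If $\boldsymbol{A}$ is invertible then $\boldsymbol{W}=\boldsymbol{0}$ and we are done, so assume $\ker\boldsymbol{A}$ is spanned by a single vector $v$. Because $\boldsymbol{A}$ is centrosymmetric, $\ker\boldsymbol{A}$ is $\boldsymbol{J}$-invariant, as $\boldsymbol{A}(\boldsymbol{J}v)=(\boldsymbol{J}\boldsymbol{A}\boldsymbol{J})(\boldsymbol{J}v)=\boldsymbol{J}\boldsymbol{A}v=\boldsymbol{0}$; together with $\boldsymbol{J}^2=\boldsymbol{I}$ this forces $\boldsymbol{J}v=\varepsilon v$ with $\varepsilon\in\{\pm1\}$. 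Since every column of $\boldsymbol{W}$ is a multiple of $v$, I can write $\boldsymbol{W}=vc^T$ for some vector $c$; symmetry of $\boldsymbol{W}$ then gives $vc^T=cv^T$, and multiplying on the right by $v$ and using $v^Tv\neq0$ (the matrices here are real) shows that $c$ is a scalar multiple of $v$, so $\boldsymbol{W}=\alpha\,vv^T$ for some $\alpha\in\mathbb{R}$.

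Finally I would play the two symmetries of $\boldsymbol{W}$ against each other. From $\boldsymbol{J}v=\varepsilon v$ we obtain $\boldsymbol{J}\boldsymbol{W}\boldsymbol{J}=\alpha(\boldsymbol{J}v)(\boldsymbol{J}v)^T=\alpha\varepsilon^2 vv^T=\boldsymbol{W}$, whereas skew-centrosymmetry gives $\boldsymbol{J}\boldsymbol{W}\boldsymbol{J}=-\boldsymbol{W}$. Hence $\boldsymbol{W}=-\boldsymbol{W}$, forcing $\boldsymbol{W}=\boldsymbol{0}$ and therefore $\boldsymbol{J}\boldsymbol{Z}\boldsymbol{J}=\boldsymbol{Z}$, which is the desired centrosymmetry. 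The main obstacle is precisely the one-dimensional-kernel case: it is the interplay of symmetry and skew-centrosymmetry of $\boldsymbol{W}$ that collapses the rank-one ambiguity, and the crucial technical point is that the kernel vector $v$ must be an eigenvector of $\boldsymbol{J}$ — which is where centrosymmetry of $\boldsymbol{A}$ (and not merely of $\boldsymbol{B}$) is essential.
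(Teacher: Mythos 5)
Your proof is correct and follows essentially the same route as the paper: both reduce to $\boldsymbol{A}(\boldsymbol{J}\boldsymbol{Z}\boldsymbol{J}-\boldsymbol{Z})=\boldsymbol{0}$, use the rank hypothesis and symmetry to force the difference into the rank-one form $\alpha\,vv^\top$, and then derive a contradiction using realness. The only (cosmetic) difference is the final step: you note that $v$ is a $\pm1$-eigenvector of $\boldsymbol{J}$ and play centrosymmetry against skew-centrosymmetry, while the paper adds the difference to its $\boldsymbol{J}$-conjugate and reads off the diagonal entries $u_i^2+u_{n+1-i}^2=0$.
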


\begin{proof}
As $\boldsymbol{B}$ is centrosymmetric, we have
\begin{align}
\boldsymbol{A}\boldsymbol{Z}=\boldsymbol{B}
=\boldsymbol{J}\boldsymbol{B}\boldsymbol{J}
=\boldsymbol{J}\boldsymbol{A}\boldsymbol{Z}\boldsymbol{J} \label{S:lem1_eq1}
\end{align}
where $\boldsymbol{J}=\begin{pmatrix}  & & & 1 \\  & & 1 & \\ & \iddots & &\\ 1 & & &\end{pmatrix}$. As $\boldsymbol{A}$ is centrosymmetric, we have $\boldsymbol{A}=\boldsymbol{J}\boldsymbol{A}\boldsymbol{J}$,  $\boldsymbol{A}\boldsymbol{J}=\boldsymbol{J}\boldsymbol{A}$;

so we have $\boldsymbol{A}\boldsymbol{Z}
=\boldsymbol{J}\boldsymbol{A}\boldsymbol{Z}\boldsymbol{J}
=\boldsymbol{A}\boldsymbol{J}\boldsymbol{Z}\boldsymbol{J}$, thus
\begin{align*}
\boldsymbol{A}(\boldsymbol{Z}-\boldsymbol{J}\boldsymbol{Z}\boldsymbol{J})=\boldsymbol{0}
\end{align*}

If $\text{rank}(\boldsymbol{A})=n$, then $\boldsymbol{Z}-\boldsymbol{J}\boldsymbol{Z}\boldsymbol{J}=\boldsymbol{0}$, and thus $\boldsymbol{Z}=\boldsymbol{J}\boldsymbol{Z}\boldsymbol{J}$, so $\boldsymbol{Z}$ is centrosymmetric.

If $\text{rank}(\boldsymbol{A})=n-1$, then we have $\text{rank}(\boldsymbol{A})+\text{rank}(\boldsymbol{Z}-\boldsymbol{J}\boldsymbol{Z}\boldsymbol{J})\leq n$, thus 
\begin{align*}
\text{rank}(\boldsymbol{Z}-\boldsymbol{J}\boldsymbol{Z}\boldsymbol{J})\leq 1
\end{align*}

If $\text{rank}(\boldsymbol{Z}-\boldsymbol{J}\boldsymbol{Z}\boldsymbol{J})=0$, then $\boldsymbol{Z}=\boldsymbol{J}\boldsymbol{Z}\boldsymbol{J}$, so $\boldsymbol{Z}$ is centrosymmetric.

If $\text{rank}(\boldsymbol{Z}-\boldsymbol{J}\boldsymbol{Z}\boldsymbol{J})=1$, then $\boldsymbol{Z}-\boldsymbol{J}\boldsymbol{Z}\boldsymbol{J}=\boldsymbol{v}^\top\boldsymbol{u}$, where $\boldsymbol{u}, \boldsymbol{v}$ are nonzero $n$-dimensional vectors. As $\boldsymbol{Z}$ is symmetric, so is $\boldsymbol{Z}-\boldsymbol{J}\boldsymbol{Z}\boldsymbol{J}$, which implies $\boldsymbol{v}=\mu\boldsymbol{u}$, where $\mu \neq0$ is a constant. Therefore, 
\begin{align}
\boldsymbol{Z}-\boldsymbol{J}\boldsymbol{Z}\boldsymbol{J}=\mu\boldsymbol{u}\boldsymbol{u}^\top
\label{S:lem1_eq2}
\end{align}
and thus
\begin{align}
\boldsymbol{J}\boldsymbol{Z}\boldsymbol{J}-\boldsymbol{Z}=\boldsymbol{J}(\boldsymbol{Z}-\boldsymbol{J}\boldsymbol{Z}\boldsymbol{J})\boldsymbol{J}=\mu\boldsymbol{J}\boldsymbol{u}\boldsymbol{u}^\top\boldsymbol{J}
\label{S:lem1_eq3}
\end{align}
Adding \eqref{S:lem1_eq2} to \eqref{S:lem1_eq3}, we obtain $\mu(\boldsymbol{u}\boldsymbol{u}^\top+\boldsymbol{J}\boldsymbol{u}\boldsymbol{u}^\top\boldsymbol{J})=\boldsymbol{0}$. As $\mu \neq0$, we have
\begin{align*}
\boldsymbol{u}\boldsymbol{u}^\top+\boldsymbol{J}\boldsymbol{u}\boldsymbol{u}^\top\boldsymbol{J}=\boldsymbol{0}
\end{align*}
 $(\boldsymbol{u}\boldsymbol{u}^\top+\boldsymbol{J}\boldsymbol{u}\boldsymbol{u}^\top\boldsymbol{J})_{ii}=0$,\ $i=1,2,\dots,n$ implies
\begin{align*}
\begin{cases}
    u_1^2+u_n^2=0\\
    u_2^2+u_{n-1}^2=0\\
    \quad\quad\vdots \\
    u_n^2+u_1^2=0
\end{cases}
\end{align*}
and thus $\boldsymbol{u}=\boldsymbol{0}$, a contradiction.
\end{proof}

\begin{remark}\label{bisymmetric}
$\boldsymbol{A}$ is symmetric$\iff\boldsymbol{A}=\boldsymbol{A}^\top$;
$\boldsymbol{A}$ is persymmetric$\iff\boldsymbol{A}=\boldsymbol{J}\boldsymbol{A}^\top\boldsymbol{J}$;
$\boldsymbol{A}$ is centrosymmetric$\iff\boldsymbol{A}=\boldsymbol{J}\boldsymbol{A}\boldsymbol{J}$;
and $\boldsymbol{A}$ is called bisymmetric if it is both symmetric and persymmetric. Among the three, knowing two allows one to deduce the third. 
\end{remark}

\begin{lemma}
\label{S:lemma2}
Let $\boldsymbol{P}$ be a Kac–Murdock–Szegő (KMS) matrix \cite{MR59482}; i.e., $p_{ij}=r^{|i-j|}$, where $r\in(-1,1)$. Define $\boldsymbol{L}=\mathrm{diag}(\boldsymbol{P}\boldsymbol{1})$ (i.e. $L_i:=\boldsymbol{L}_{ii}$ is the sum of the i-th row of $\boldsymbol{P}$). If $\boldsymbol{Z}$ is a matrix that satisfies $(\boldsymbol{P}-\boldsymbol{L})\boldsymbol{Z}=\alpha\left(\boldsymbol{P}\boldsymbol{L}^{-1}-\frac{\boldsymbol{1}\boldsymbol{1}^\top}{n}\right)$, where $\alpha$ is a constant, then $\boldsymbol{Z}$ satisfies $z_{ij}=
\begin{cases} z_{i1}+z_{nj}-z_{n1}, & i\geq j \\ z_{in}+z_{1j}-z_{1n}, & i\leq j \end{cases} $.
\end{lemma}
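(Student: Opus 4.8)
The plan is to convert the stated two-case identity into a statement about mixed second differences, and then to exploit the fact that the inverse of a KMS matrix is tridiagonal to reduce the dense linear system $(\boldsymbol P-\boldsymbol\Lambda)\boldsymbol Z=\boldsymbol B$ (with $\boldsymbol B=\alpha(\boldsymbol P\boldsymbol\Lambda^{-1}-\tfrac1n\boldsymbol1\boldsymbol1^\top)$) to a nearest-neighbour recurrence in the row index. First I would observe that the claimed recurrence is equivalent to the vanishing of the mixed second difference $\Delta_{ij}:=z_{ij}-z_{i+1,j}-z_{i,j+1}+z_{i+1,j+1}$ for every $i\ne j$: telescoping $\Delta$ over the rectangle with corners $(i,j),(n,j),(i,1),(n,1)$ (all lying in $\{i\ge j\}$) gives $z_{ij}=z_{i1}+z_{nj}-z_{n1}$ on the lower triangle, and symmetrically $z_{ij}=z_{in}+z_{1j}-z_{1n}$ on the upper triangle. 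Writing $w^{(j)}:=\boldsymbol z_j-\boldsymbol z_{j+1}$ for the difference of consecutive columns, one has $\Delta_{ij}=w^{(j)}_i-w^{(j)}_{i+1}$, so the goal becomes: $w^{(j)}$ is constant on $\{1,\dots,j\}$ and constant on $\{j+1,\dots,n\}$. This reformulated claim is manifestly invariant under $\boldsymbol Z\mapsto\boldsymbol Z+\boldsymbol1\boldsymbol w^\top$, matching the kernel $\boldsymbol1$ of $\boldsymbol P-\boldsymbol\Lambda$, so it is legitimate to argue directly from the equation for an arbitrary solution.

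Next I would record the classical fact that $\boldsymbol P^{-1}$ is tridiagonal, and that for $r\ne0$ its off-diagonal entries are all nonzero (otherwise $\boldsymbol P^{-1}$ would be block-diagonal, forcing the same on $\boldsymbol P$, which is impossible since $p_{ij}=r^{|i-j|}\ne0$). Multiplying the equation on the left by $\boldsymbol P^{-1}$ yields the equivalent system $\boldsymbol M\boldsymbol Z=\boldsymbol R$, where $\boldsymbol M:=\boldsymbol P^{-1}\boldsymbol\Lambda-\boldsymbol I$ and $\boldsymbol R:=-\boldsymbol P^{-1}\boldsymbol B=-\alpha\boldsymbol\Lambda^{-1}+\tfrac{\alpha}{n}(\boldsymbol P^{-1}\boldsymbol1)\boldsymbol1^\top$. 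The three facts I need are: (i) $\boldsymbol M$ is tridiagonal with nonzero sub- and super-diagonal, being the product of the tridiagonal $\boldsymbol P^{-1}$ with the invertible diagonal $\boldsymbol\Lambda$, minus $\boldsymbol I$; (ii) $\boldsymbol M$ has zero row sums, since $\boldsymbol M\boldsymbol1=\boldsymbol P^{-1}(\boldsymbol\Lambda-\boldsymbol P)\boldsymbol1=\boldsymbol P^{-1}(\boldsymbol P\boldsymbol1-\boldsymbol P\boldsymbol1)=\boldsymbol0$; and (iii) $\boldsymbol R$ is a diagonal matrix plus a matrix that is constant along each row, so the column difference $\boldsymbol r_j-\boldsymbol r_{j+1}$ is supported only on rows $j$ and $j+1$.

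Finally I would run a two-sided induction. Subtracting consecutive columns of $\boldsymbol M\boldsymbol Z=\boldsymbol R$ gives $\boldsymbol M w^{(j)}=\boldsymbol r_j-\boldsymbol r_{j+1}$, whose right-hand side vanishes in every row $i\notin\{j,j+1\}$. For such a homogeneous row, tridiagonality and zero row sums turn the equation into $M_{i,i-1}(w^{(j)}_{i-1}-w^{(j)}_i)+M_{i,i+1}(w^{(j)}_{i+1}-w^{(j)}_i)=0$, while the boundary rows $i=1,n$ collapse to the two-term relations $M_{11}(w^{(j)}_1-w^{(j)}_2)=0$ and $M_{n,n-1}(w^{(j)}_{n-1}-w^{(j)}_n)=0$. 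Because all off-diagonal entries are nonzero, starting at $i=1$ (homogeneous once $j\ge2$) forces $w^{(j)}_1=w^{(j)}_2$, and then marching up through the homogeneous rows $2,\dots,j-1$ forces $w^{(j)}_1=\cdots=w^{(j)}_j$; symmetrically, starting at $i=n$ (homogeneous once $j\le n-2$) and marching down through rows $n-1,\dots,j+2$ forces $w^{(j)}_{j+1}=\cdots=w^{(j)}_n$. The extreme values $j=1$ and $j=n-1$ make one of the two ranges a singleton, so they hold automatically. This is exactly the constancy of $w^{(j)}$ on the two index ranges, which is the desired recurrence.

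The main obstacle is the single genuinely structural input, the tridiagonality of $\boldsymbol P^{-1}$, which linearises the dense system into a nearest-neighbour recurrence; once that is in hand, the zero-row-sum property (inherited from $(\boldsymbol\Lambda-\boldsymbol P)\boldsymbol1=\boldsymbol0$), the localisation of the right-hand side to rows $j,j+1$, and the two boundary rows make the propagation essentially forced. The points requiring care are verifying that $\boldsymbol r_j-\boldsymbol r_{j+1}$ is supported exactly on rows $j$ and $j+1$ (so that a full homogeneous recurrence is available on each side of the jump) and checking that the boundary rows and the extreme columns $j\in\{1,n-1\}$ do not collide with the jump. Together with the centrosymmetry supplied by Lemma~\ref{S:lemma1}, this recurrence is what will ultimately determine $\boldsymbol Z$.
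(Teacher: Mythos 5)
Your proof is correct, and it reaches the conclusion by a recognizably different packaging of the same structural mechanism that the paper uses. The paper never mentions the tridiagonal inverse: it subtracts the extreme column ($j=n$, resp.\ $j=1$) from column $j$ to form $E_{ij}=z_{ij}-z_{in}$ (resp.\ $D_{ij}=z_{ij}-z_{i1}$), and then performs the row combination $r^{-1}\cdot(\text{row }i)-(\text{row }i+1)$ --- which is exactly the operation exhibiting $\boldsymbol{P}^{-1}$ as tridiagonal --- so that all terms $E_{kj}$ with $k>i+1$ cancel; a single induction on $i$, carrying the hypothesis $E_{1j}=\cdots=E_{ij}$, then collapses the remaining prefix and yields $\Lambda_{i+1}(E_{i+1,j}-E_{ij})=0$ directly, producing the two-case formula in one pass with no telescoping. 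You instead difference \emph{consecutive} columns, multiply through by $\boldsymbol{P}^{-1}$ to get the genuinely nearest-neighbour system $\boldsymbol{M}w^{(j)}=\boldsymbol{r}_j-\boldsymbol{r}_{j+1}$ with $\boldsymbol{M}=\boldsymbol{P}^{-1}\boldsymbol{\Lambda}-\boldsymbol{I}$, propagate constancy of $w^{(j)}$ from both boundary rows, and recover the stated formula by telescoping mixed second differences over a rectangle (I checked that all cells of that rectangle lie strictly off the diagonal, so the telescoping is legitimate). What your version buys is a clean isolation of the three inputs that make the propagation forced --- tridiagonality with nonzero off-diagonals, zero row sums inherited from $(\boldsymbol{P}-\boldsymbol{\Lambda})\boldsymbol{1}=\boldsymbol{0}$, and the localization of the source to rows $j,j+1$ because $\boldsymbol{\Lambda}^{-1}$ is diagonal while the rank-one term is constant along rows --- at the cost of the extra reduction and telescoping steps; the paper's version is more computational but shorter end to end. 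Two hypotheses are implicit in both arguments and worth stating: $r\neq 0$ (the paper multiplies by $r^{-1}$, you need the off-diagonal entries of $\boldsymbol{P}^{-1}$ to be nonzero; for $r=0$ the lemma is vacuous or false since $\boldsymbol{P}-\boldsymbol{\Lambda}=\boldsymbol{0}$) and $\Lambda_i\neq 0$ for all $i$ (needed for $M_{i,i\pm 1}\neq 0$, but already forced by the appearance of $\boldsymbol{\Lambda}^{-1}$ in the statement); both hold in the application, where $r=p^{-1}$.
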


\begin{proof}
It is equivalent to show
\begin{align*}
\begin{cases} 
z_{ij}-z_{i1}=z_{nj}-z_{n1}, & i\geq j \\
z_{ij}-z_{in}=z_{1j}-z_{1n}, & i\leq j 
\end{cases}
\end{align*}
denoting $
\begin{cases} 
D_{ij}=z_{ij}-z_{i1}, & i\geq j \\
E_{ij}=z_{ij}-z_{in}, & i\leq j 
\end{cases} $, we need to show
\begin{align*}
\begin{cases} 
D_{ij}=D_{nj}, & i\geq j \\
E_{ij}=E_{1j}, & i\leq j 
\end{cases}
\end{align*}
First, we consider the $i\leq j$ case. Our goal is to prove $E_{1j}=E_{2j}=\dots=E_{jj},$ $\forall j=1,2,\dots,n$. As
\begin{align}
\sum_{k=1}^np_{ik}z_{kj}-L_{i}z_{ij}=\alpha\left(\frac{p_{ij}}{L_j}-\frac{1}{n}\right),
\label{S:lem2_eq1}
\end{align}
when we let $j=n$, we have
\begin{align}
\sum_{k=1}^np_{ik}z_{kn}-L_{i}z_{in}=\alpha\left(\frac{p_{in}}{L_n}-\frac{1}{n}\right)
\label{S:lem2_eq2}
\end{align}
Subtracting \eqref{S:lem2_eq2} from \eqref{S:lem2_eq1} gives
\begin{align*}
\sum_{k=1}^np_{ik}E_{kj}-L_{i}E_{ij}=\alpha\left(\frac{p_{ij}}{L_j}-\frac{p_{in}}{L_n}\right)
\end{align*}
As $p_{ij}=r^{|i-j|}$,
\begin{align}
L_{k}=\sum_{i=0}^{n-k}r^{i}+\sum_{i=1}^{k-1}r^{i}
\label{S:lem2_eq3}
\end{align}
so we have
\begin{align}
\sum_{k=1}^{i-1}r^{i-k}E_{kj}+\sum_{k=i}^{n}r^{k-i}E_{kj}-L_{i}E_{ij}=\alpha\left(\frac{r^{j-i}}{L_j}-\frac{r^{n-i}}{L_n}\right)
\label{S:lem2_eq4}
\end{align}
Replacing $i$ with $i+1$ yields
\begin{align}
\sum_{k=1}^{i}r^{i+1-k}E_{kj}+\sum_{k=i+1}^{n}r^{k-i-1}E_{kj}-L_{i+1}E_{i+1,j}=\alpha\left(\frac{r^{j-i-1}}{L_j}-\frac{r^{n-i-1}}{L_n}\right)
\label{S:lem2_eq5}
\end{align}
Multiplying \eqref{S:lem2_eq4} by $r^{-1}$ and then subtracting \eqref{S:lem2_eq5} yields
\begin{align}
\sum_{k=1}^{i}(r^{i-k-1}-r^{i+1-k})E_{kj}
-r^{-1}L_{i}E_{ij}+L_{i+1}E_{i+1,j}=0
\label{S:lem2_eq7}
\end{align}
Next, by induction:\\
When $i=1$, \eqref{S:lem2_eq7} becomes
\begin{align}
(r^{-1}-r)E_{1j}
-r^{-1}L_{1}E_{1j}+L_{2}E_{2j}=0
\label{S:lem2_eq8}
\end{align}
Substitute \eqref{S:lem2_eq3} into \eqref{S:lem2_eq8}, and we have
\begin{equation*}
    \left(r^{-1}-r-\sum_{k=0}^{n-1}r^{k-1}\right)E_{1j}
+L_{2}E_{2j}=0
\end{equation*}
which is
\begin{equation*}
    \left(-r-\sum_{k=0}^{n-2}r^{k}\right)E_{1j}
+L_{2}E_{2j}=0
\end{equation*}
That is,
\begin{align*}
L_{2}(E_{2j}-E_{1j})=0
\end{align*}
and hence $E_{2j}=E_{1j}$.

For the induction step, we assume $E_{1j}=E_{2j}=\dots=E_{ij}$, where $i<j$, and \eqref{S:lem2_eq7} becomes
\begin{align}
\left(\sum_{k=1}^{i}(r^{i-k-1}-r^{i+1-k})-r^{-1}L_{i}\right)E_{ij}+L_{i+1}E_{i+1,j}=0
\label{S:lem2_eq9}
\end{align}
Substitute \eqref{S:lem2_eq3} into \eqref{S:lem2_eq9}, and we have
\begin{equation*}
    \left(\sum_{k=1}^{i}\left(r^{i-k-1}-r^{i+1-k}\right)-r^{-1}\left(\sum_{k=0}^{n-i}r^{k}+\sum_{k=1}^{i-1}r^{k}\right)\right)E_{ij}+L_{i+1}E_{i+1,j}=0
\end{equation*}
I.e.,
\begin{equation*}
    -\left(\sum_{k=0}^{n-i-1}r^{k}+\sum_{k=1}^{i}r^{k}\right)E_{ij}+L_{i+1}E_{i+1,j}=0
\end{equation*}
That is,
\begin{align*}
L_{i+1}(E_{i+1,j}-E_{ij})=0
\end{align*}
and hence $E_{i+1,j}=E_{ij}$.\\
For the $i\geq j$ case, we multiply \eqref{S:lem2_eq4} by $r$, subtract \eqref{S:lem2_eq5}, and use induction, in parallel to the $i\leq j$ case. We obtain $D_{jj}=D_{j+1,j}=\dots=D_{nj}, \forall j=1,2,\dots,n$.
\end{proof}

\begin{remark}
$\boldsymbol{Z}$ does not need to be assumed to be symmetric in Lemma \ref{S:lemma2}.
\end{remark}

\begin{lemma}
\label{S:lemma3}
With the same hypotheses as Lemma \ref{S:lemma2}, $\boldsymbol{Z}$ satisfies $z_{ij}+z_{jk}+z_{ki}=z_{ji}+z_{kj}+z_{ik}$.
\end{lemma}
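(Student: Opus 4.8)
The plan is to recast the identity as a cocycle condition on the antisymmetric part of $\boldsymbol{Z}$ and then to manufacture a potential for that cocycle directly out of the previous lemma. Writing $w_{ij}:=z_{ij}-z_{ji}$, the asserted equality $z_{ij}+z_{jk}+z_{ki}=z_{ji}+z_{kj}+z_{ik}$ is, after moving everything to one side, exactly the relation $w_{ij}+w_{jk}+w_{ki}=0$. Since any antisymmetric array of the special form $w_{ij}=g_i-g_j$ automatically satisfies this relation, it suffices to exhibit a single vector $(g_i)$ with $w_{ij}=g_i-g_j$ for all $i,j$; the lemma will then follow from the telescoping $(g_i-g_j)+(g_j-g_k)+(g_k-g_i)=0$, with no need to separate cases according to whether $i,j,k$ are distinct.

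First I would read $w_{ij}$ off the characterization in Lemma~\ref{S:lemma2}. For $i<j$ its two branches give $z_{ij}=z_{in}+z_{1j}-z_{1n}$ and $z_{ji}=z_{j1}+z_{ni}-z_{n1}$, so subtracting and regrouping the entries into antisymmetric differences yields
\[
w_{ij}=w_{in}+w_{1j}+w_{n1}.
\]
This already reduces a general upper-triangular $w_{ij}$ to the distinguished index triple $\{1,j,n\}$, but it does not yet close the argument, because the middle term $w_{1j}$ remains uncontrolled.

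The crux, and the step I expect to carry all the genuine content, is the observation that both branches of Lemma~\ref{S:lemma2} apply to a diagonal entry $z_{jj}$ and must therefore agree there. Equating the $i\le j$ expression $z_{jj}=z_{jn}+z_{1j}-z_{1n}$ with the $i\ge j$ expression $z_{jj}=z_{j1}+z_{nj}-z_{n1}$, and once more collecting antisymmetric differences, produces precisely the base-case cocycle
\[
w_{jn}+w_{1j}+w_{n1}=0,
\]
equivalently $w_{1j}=-w_{jn}-w_{n1}$. Everything preceding and following this diagonal-consistency identity is mechanical substitution; this is the only place where the structure of $\boldsymbol{Z}$ is really used.

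Finally, substituting $w_{1j}=-w_{jn}-w_{n1}$ into the displayed formula collapses it to $w_{ij}=w_{in}-w_{jn}$ for $i<j$. Setting $g_i:=w_{in}=z_{in}-z_{ni}$ then gives $w_{ij}=g_i-g_j$ for $i<j$; by antisymmetry of $w$ the same identity holds for $i>j$ (by negation) and trivially for $i=j$, so $w_{ij}=g_i-g_j$ for all $i,j$. This is the potential demanded at the outset, and the cocycle relation, hence the stated lemma, follows at once.
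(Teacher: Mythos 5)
Your proof is correct and rests on the same two ingredients as the paper's: substituting the branch formulas of Lemma~\ref{S:lemma2}, and exploiting that the two branches must agree on the diagonal entry $z_{jj}$ (which is precisely where the paper's own proof terminates, in its final reduction to $z_{jj}=z_{jj}$). Your repackaging via the antisymmetric part $w_{ij}=z_{ij}-z_{ji}$ and the explicit potential $g_i=z_{in}-z_{ni}$ is a clean way to avoid the paper's WLOG ordering $i\ge j\ge k$, but it is the same argument in substance.
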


\begin{proof}
Without loss of generality, we assume $i\geq j\geq k$; then by Lemma \ref{S:lemma2}, we have
\begin{align*}
\begin{cases}
z_{ij}=z_{i1}+z_{nj}-z_{n1}\\
z_{jk}=z_{j1}+z_{nk}-z_{n1}\\
z_{ki}=z_{kn}+z_{1i}-z_{1n}\\
z_{ji}=z_{jn}+z_{1i}-z_{1n}\\
z_{kj}=z_{kn}+z_{1j}-z_{1n}\\
z_{ik}=z_{i1}+z_{nk}-z_{n1}
\end{cases}
\end{align*}
so it suffices to show
\begin{equation*}
    z_{i1}+z_{nj}-z_{n1}+z_{j1}+z_{nk}-z_{n1}+z_{kn}+z_{1i}-z_{1n}=z_{jn}+z_{1i}-z_{1n}+z_{kn}+z_{1j}-z_{1n}+z_{i1}+z_{nk}-z_{n1}
\end{equation*}
This is
\begin{align*}
z_{nj}+z_{j1}-z_{n1}=z_{jn}+z_{1j}-z_{1n}
\end{align*}
which follows from Lemma \ref{S:lemma2}.
\end{proof}

\begin{lemma}
\label{S:theorem1}
Let $\boldsymbol{C}$ be a solution to \eqref{S:eq2}; once we reindex to start from $(i,j)=(1,1)$, $\boldsymbol{C}$ satisfies
$c_{ij}:=\boldsymbol{C}_{ij}=
\begin{cases} c_{i1}+c_{mj}-c_{m1}, & i\geq j \\ c_{im}+c_{1j}-c_{1m}, & i\leq j \end{cases} $.
\end{lemma}

\begin{proof}

Using the notation of Lemma \ref{S:lemma2}, $p_{ij}:=P_{ij}=p^{-|i-j|}$, $\boldsymbol{L}=\text{diag}(\boldsymbol{P}\boldsymbol{1})$. We denote
\begin{equation*}
    \Lambda_{i}:=\frac{p+1-p^{1-i}-p^{i-m}}{p}=1+p^{-1}-p^{-1}U(i)
\end{equation*}
where $U(i)$ is as in \eqref{E:Uy} after the reindexing.

Then $\Lambda_{i}=(1-p^{-1})L_i$.  We have $\boldsymbol{A}=(1-p^{-1})(\boldsymbol{P}-\boldsymbol{L})$ and $D\boldsymbol{B}_{p,m}=-\boldsymbol{P}\boldsymbol{\Lambda^{-1}}$;  \eqref{S:eq2} becomes
\begin{align}
(1-p^{-1})(\boldsymbol{P}-\boldsymbol{L})\boldsymbol{C}=\frac{1}{1-p^{-1}}\left(\boldsymbol{P}\boldsymbol{L}^{-1}-\frac{\boldsymbol{1}\boldsymbol{1}^\top}{m}\right)
\label{S:thm1_eq1}
\end{align}
Then by Lemma \ref{S:lemma2} with $\alpha=\frac{1}{(1-p^{-1})^2}$, $n=m$, and $r=p^{-1}$, it follows that $\boldsymbol{C}$ satisfies $c_{ij}=
\begin{cases} c_{i1}+c_{mj}-c_{m1}, & i\geq j \\ c_{im}+c_{1j}-c_{1m}, & i\leq j \end{cases}$.
\end{proof}

\begin{corollary}
   Equation \eqref{S:eq2} has a solution given by a symmetric $\boldsymbol{C}$, therefore also a bisymmetric $\boldsymbol{C}$.  
\end{corollary}

\begin{proof}
The existence of solutions to \eqref{S:eq2} has already been proved. Suppose $\boldsymbol{C}$ is a solution to equation \eqref{S:eq2}, then after possibly adding suitable constants to its columns, we may assume that the first row of the solution $\boldsymbol{C}$ agrees with the transpose of its first column. By Lemma \ref{S:theorem1}, looking at diagonal entries of $\boldsymbol{C}$, we conclude that the last row of $\boldsymbol{C}$ agrees with the transpose of the last column of $\boldsymbol{C}$. Applying Lemma \ref{S:theorem1} again, we deduce that $\boldsymbol{C}$ is symmetric.

Next, one checks that $\boldsymbol{A}$ is bisymmetric and $\boldsymbol{B}$ is centrosymmetric. So by Lemma \ref{S:lemma1}, $\boldsymbol{C}$ is centrosymmetric, and thus it is bisymmetric by remark \ref{bisymmetric}.
\end{proof}

\begin{theorem} \label{CFormulasThm}
A symmetric $\boldsymbol{C}$ can be computed directly using a set of recursive formulas \eqref{E:CRecur}.
\end{theorem}

\begin{proof}
We have equation \eqref{S:thm1_eq1}
\begin{align*}
(1-p^{-1})\sum_{k=1}^mp_{ik}c_{kj}-\Lambda_{i}c_{ij}=\frac{1}{1-p^{-1}}\left((1-p^{-1})\frac{p_{ij}}{\Lambda_j}-\frac{1}{m}\right)
\end{align*}
Letting $j=1$, for all $1\leq i\leq m-1$ we have
\begin{align}
(1-p^{-1})\left(\sum_{k=1}^{i-1}p^{-(i-k)}c_{k1}+\sum_{k=i}^{m}p^{-(k-i)}c_{k1}\right)-\Lambda_{i}c_{i1}=\frac{1}{1-p^{-1}}\left(\frac{p^{-(i-1)}}{\Lambda_1}-\frac{1}{m}\right)
\label{S:thm2_eq1}
\end{align}
Replacing $i$ with $i+1$ yields
\begin{align}
(1-p^{-1})\left(\sum_{k=1}^{i}p^{-(i+1-k)}c_{k1}+\sum_{k=i+1}^{m}p^{-(k-i-1)}c_{k1}\right)-\Lambda_{i+1}c_{i+1,1}=\frac{1}{1-p^{-1}}\left(\frac{p^{-i}}{\Lambda_1}-\frac{1}{m}\right)
\label{S:thm2_eq2}
\end{align}
Multiply \eqref{S:thm2_eq1} by $p^{-1}$ and subtract \eqref{S:thm2_eq2}, we obtain
\begin{align}
(1-p^{-1})(p^{-1}-p)\sum_{k=i+1}^{m}p^{i-k}c_{k1}
-p^{-1}\Lambda_{i}c_{i1}+\Lambda_{i+1}c_{i+1,1}=\frac{1}{m}
\label{S:thm2_eq3}
\end{align}
Replacing $i$ with $i-1$, we have
\begin{align}
(1-p^{-1})(p^{-1}-p)\sum_{k=i}^{m}p^{i-1-k}c_{k1}
-p^{-1}\Lambda_{i-1}c_{i-1,1}+\Lambda_{i}c_{i1}=\frac{1}{m}
\label{S:thm2_eq4}
\end{align}
Multiplying \eqref{S:thm2_eq4} by $p$ and subtracting \eqref{S:thm2_eq3} gives 
\begin{equation*}
    ((1-p^{-1})(p^{-1}-p)+(p+p^{-1})\Lambda_{i})c_{i1}-\Lambda_{i-1}c_{i-1,1}-\Lambda_{i+1}c_{i+1,1}=\frac{p-1}{m}
\end{equation*}
and so for $2\leq i\leq m-1$,
\begin{align*}
c_{i-1,1}={}&\frac{1}{\Lambda_{i-1}}\left(((1-p^{-1})(p^{-1}-p)+(p+p^{-1})\Lambda_{i})c_{i1}-\Lambda_{i+1}c_{i+1,1}-\frac{p-1}{m}\right)\\
={}&\dfrac{\Lambda_{i-1}+\Lambda_{i+1}}{\Lambda_{i-1}}c_{i,1}-\dfrac{\Lambda_{i+1}}{\Lambda_{i-1}}c_{i+1,1}-\dfrac{p-1}{m\Lambda_{i-1}}
\end{align*}
Taking $i=m-1$ in \eqref{S:thm2_eq3}, we have
\begin{equation*}
    (1-p^{-1})(p^{-1}-p)p^{-1}c_{m1}
-p^{-1}\Lambda_{m-1}c_{m-1,1}+\Lambda_{m}c_{m1}=\frac{1}{m}
\end{equation*}
and thus
\begin{align*}
c_{m-1,1}&=\frac{1}{\Lambda_{m-1}}\left(((1-p^{-1})(p^{-1}-p)+p\Lambda_{m})c_{m1}
-\frac{p}{m}\right)\\
&=c_{m1}-\frac{p}{m\Lambda_{m-1}}
\end{align*}
To summarize, we have the following recursive formulas:
\begin{align*}
\begin{cases}
c_{m-1,1}=c_{m1}-\dfrac{p}{m\Lambda_{m-1}}\\[5pt]
c_{i-1,1}=\dfrac{\Lambda_{i-1}+\Lambda_{i+1}}{\Lambda_{i-1}}c_{i,1}-\dfrac{\Lambda_{i+1}}{\Lambda_{i-1}}c_{i+1,1}-\dfrac{p-1}{m\Lambda_{i-1}}\\[5pt]
c_{ij}=
\begin{cases} 
c_{i1}+c_{m-j+1,1}-c_{m1}, & i\geq j \\ 
c_{m-i+1,1}+c_{j1}-c_{m1}, & i\leq j 
\end{cases}
\end{cases}
\end{align*}
where $\Lambda_{i}=1+p^{-1}-p^{-i}-p^{i-m-1}$.

When we reindex to start from $(i,j)=(0,0)$, this becomes
\begin{equation}
    c_{ij}=\begin{cases}
        c_{m-1,0}-\dfrac{p}{m\Lambda_{m-2}} & i=m-2,j=0\\[10pt]
        \dfrac{\Lambda_i+\Lambda_{i+2}}{\Lambda_i}c_{i+1,0}-\dfrac{\Lambda_{i+2}}{\Lambda_i}c_{i+2,0}-\dfrac{p-1}{m\Lambda_i} & i<m-2,j=0\\[5pt]
        c_{i0}+c_{m-1-j,0}-c_{m-1,0} & i\geq j\\
        c_{m-1-i,0}+c_{j0}-c_{m-1,0} & i\leq j
    \end{cases}\label{E:CRecur}
\end{equation}
where $c_{m-1,0}$ is arbitrary and $\Lambda_i=1+p^{-1}-p^{-1}U(x)=1+p^{-1}-p^{-i-1}-p^{-m+i}$.
\end{proof}
Theorem \ref{CFormulasThm} completes the proof of the Main Theorem \eqref{MainThm}.

For example, if we take $c_{m-1,0}=0$, then $c_{m-2,0}=-\frac{p}{m\Lambda_{m-2}}=-\frac{p^m}{m(p^{m-1}+p^{m-2}-p^{m-3}-1)}$. Explicit numerical values for $C_{p,m}(x,y)$ obtained from these recurrences with $c_{m-1,0}=0$ are provided in appendix B.

\section{Relation with the N\'eron local height function on the Tate curve}
Write $C(v_{p}(x),v_{p}(y))=C_{p,m}(x,y)$.
\begin{corollary}\label{HeightCor}
When $m$ is odd, after choosing the new ``central normalization'' $C(\frac{m-1}{2},\frac{m-1}{2})=0$, we have 
\begin{equation}\label{height}
\lim_{p\to\infty}\left(-\frac{1}{p}G\left(xp^{\frac{m-1}{2}},p^{\frac{m-1}{2}}\right)\right)=h(x)-\frac{m}{12},
\end{equation} where $h(x)$ is the N\'eron local height of the point $x$ \cite{Silverman}. The result generalizes directly to a finite extension of $\mathbb{Q}_p$ due to section \ref{extension}.
\end{corollary}

\begin{proof}
    From \eqref{E:Bpmxy}, \eqref{E:lambda0} and \eqref{E:lambdan}, we have
    \begin{align*}
        \lim_{p\to\infty}\left(-\frac{1}{p}B_{p,m}(x,1)\right)=v_p(x-1)
    \end{align*}
    
    From \eqref{E:CRecur}, by taking the $p\to\infty$ limit in the $0$-th column, we have
    \begin{align*}
    \lim_{p\to\infty}\left(-\frac{1}{p}C(m-1-k,0)\right)=2\lim_{p\to\infty}\left(-\frac{1}{p}C(m-k,0)\right)-\lim_{p\to\infty}\left(-\frac{1}{p}C(m+1-k,0)\right)+\frac{1}{m}
    \end{align*}
    for $1<k\leq m-1$, and also $C(m-1,0)=0$ and $\lim_{p\to\infty}\left(-\frac{1}{p}C(m-2,0)\right)=\frac{1}{m}$.
    Therefore
    \begin{align*}
    \lim_{p\to\infty}\left(-\frac{1}{p}C(m-1-k,0)\right)=\frac{(k+1)k}{2m}
    \end{align*}
    for $0\leq k\leq m-1$.
    
    By Lemma \ref{S:theorem1}, the $\frac{m-1}{2}$-th column of $C$ satisfies:
    \begin{align*}
        &C(k+\tfrac{m-1}{2}\smallpmod{m},\tfrac{m-1}{2})\\
        ={}&\begin{cases}
            C(\tfrac{m-1}{2}+k,0)+C(\tfrac{m-1}{2},0)-C(m-1,0)&0\leq k\leq\frac{m-1}{2}\\
            C(\tfrac{m-1}{2}+m-k,0)+C(\tfrac{m-1}{2},0)-C(m-1,0)&\frac{m+1}{2}\leq k\leq m-1\\
        \end{cases}\\
        ={}&C(\tfrac{m-1}{2}+\tfrac{m}{2}-|k-\tfrac{m}{2}|,0)+C(\tfrac{m-1}{2},0)-C(m-1,0)\\
        ={}&C(m-1-(|k-\tfrac{m}{2}|-\tfrac{1}{2}),0)+C(\tfrac{m-1}{2},0)-C(m-1,0)
    \end{align*}
    After we choose the new normalization $C(\tfrac{m-1}{2},\tfrac{m-1}{2})=0$, we have $C(m-1,0)=2C(\tfrac{m-1}{2},0)$, and thus
    \begin{align*}
        C(k+\tfrac{m-1}{2}\smallpmod{m},\tfrac{m-1}{2})=C(m-1-(|k-\tfrac{m}{2}|-\tfrac{1}{2}),0)-C(\tfrac{m-1}{2},0)
    \end{align*}
    therefore
    \begin{align*}
        &\lim_{p\to\infty}\left(-\frac{1}{p}C(k+\tfrac{m-1}{2}\smallpmod{m},\tfrac{m-1}{2})\right)\\
        ={}&\lim_{p\to\infty}\left(-\frac{1}{p}C(m-1-(|k-\tfrac{m}{2}|-\tfrac{1}{2}),0)\right)-\lim_{p\to\infty}\left(-\frac{1}{p}C(\tfrac{m-1}{2},0)\right)\\
        ={}&\frac{(|k-\frac{m}{2}|-\frac{1}{2})(|k-\frac{m}{2}|+\frac{1}{2})}{2m}-\frac{\frac{m-1}{2}\frac{m+1}{2}}{2m}\\
        ={}&\frac{k^2}{2m}-\frac{k}{2}\\
    \end{align*}
    hence
    \begin{align*}
        &\lim_{p\to\infty} \left(-\frac{1}{p}G\left(xp^{\frac{m-1}{2}},p^{\frac{m-1}{2}}\right)\right)\\
        ={}&\lim_{p\to\infty} \left(-\frac{1}{p}\left(B_{p,m}(x,1)+C(k+\tfrac{m-1}{2}\smallpmod{m},\tfrac{m-1}{2})\right)\right)\\
        ={}&v_p(x-1)+\frac{k^2}{2m}-\frac{k}{2}\\
        ={}&h(x)-\frac{m}{12}
    \end{align*}
\end{proof}

\begin{remark}
This is a non-Archimedean counterpart of the fact that on an elliptic curve over $\mathbb{C}$, the local height function is given by the Green's function. A simultaneous shift in both arguments of the Green's function by a constant as in equation \eqref{height} does not have any effect in the Archimedean case, whereas in the non-Archimedean case, this is needed since the definition of the Laplacian relies on choosing a fundamental domain. As a consequence, the Green's function does not respect this shift symmetry.
\end{remark}

\begin{remark}
If we interpolate the domain of $C$ to allow rational arguments, then
\begin{equation}\label{GH}
\lim_{p\to\infty} \left(-\frac{1}{p}\left(B_{p,m}(x,1)+m^2C(m-\tfrac{k}{m},0)\right)\right)
=v_p(x-1)+\frac{1}{2}k\left(\frac{k}{m}-1\right)
=h(x)-\frac{m}{12}
\end{equation}
where $k=v_p(x)$.

Note that for each fixed $y$, the Green's function $G(x,y)$ uniquely determines $B_{p,m}(x,y)$ and $C_{p,m}(x,y)$ up to an overall shift by a constant. Therefore, the interpolated Green's function determines the local height function for all $m$.
\end{remark}

\begin{remark}
There is a reason behind the elementary transformations of arguments involved in \eqref{GH}: the non-Archimedean N\'eron local height function on the Tate curve, as defined via intersection theory on arithmetic surface given by the Tate curve integral model, involves solving a Green's function on the cycle graph of size $m$, with sources at $0$ and $k$, and then evaluating the Green's function at $k$. On the other hand, our function $C(m-1-k,0)$ is the value of the Green's function for a path graph of size $m$, with sources at $0$ and $k$, where $0$ is the initial vertex of the path. This is because our $G(x,1)$ does not respect the flip symmetry $x\to p^{m-1}x^{-1}$, but instead, $G(x,y)$ respects the double flip symmetry \eqref{double flip}.
\end{remark}

\begin{remark}
In terms of physics, the Green's function on the Tate curve is the two-point function for the proposed $p$-adic string worldsheet action \eqref{action} on the Tate curve, by a standard path integral argument as in chapter 9 of \cite{PS}. This is a non-Archimedean conformal field theory (CFT) of free scalar Bosons in genus one \cite{HMST}. So as $p\to\infty$, the leading term of the two-point function of this CFT gives rise to the local height function. A natural speculation is that this CFT has a bulk dual description on the quotient of the Bruhat-Tits tree $T_p/\Gamma$, where $\Gamma$ is again the discrete subgroup generated by $\begin{bmatrix} q & 0\\0 & 1\end{bmatrix}$. The bulk theory probably contains nontrivial interactions whose contributions to the boundary-to-boundary propagator vanish in the limit $p\to\infty$.

Indeed, in \cite{HJ2024}, a different CFT on the Tate curve is derived from the $p$-adic AdS/CFT correspondence, from the free scalar field action in the bulk $T_p/\Gamma$. Equations (70)-(72) in \cite{HJ2024} show that the Green's function for this different CFT does reproduce the local height function. This is an indication that  different CFTs on the Tate curve may have identical large $p$ limit for the two point function, given by the local height function. Furthermore, this also suggests that one may relate generic features of non-Archimedean CFT two point function defined on various geometry, to the non-Archimedean local height function in a more conceptual way: for the former, develop a non-Archimedean version of portion of the conformal bootstrap at finite temperature, building on \cite{Bootstrap}\cite{HJ2024}, and for the latter, use defining properties of the local height function. 
\end{remark}

We shall defer the detailed investigation of the physics interpretation to a future paper.

\section{Finite extensions of \texorpdfstring{$\mathbb{Q}_p$}{Q\textunderscore p}}\label{extension}

Given $K$ a finite extension of $\mathbb{Q}_p$ with ramification index $e$ and residue field degree $p^f$, we can easily translate all of our computations for $B_{p,m}$ and $C_{p,m}$ to analogous $B_{K,m}$ and $C_{K,m}$ defined on $K^\times/q^\mathbb{Z}$ for $|q|_K<1$. For $\pi$ a uniformizer of $K$, set $v_K(\pi)=1$ (and so $v_K(p)=e$), let $|x|_K=p^{-fv_K(x)}$, and normalize the Haar measure $\mu^+_K$ by $\mu^+_K(\mathcal{O}_K)=1$ (where $\mathcal{O}_K$ is the ring of integers of $K$). Similar to with $\mathbb{Q}_p$, we need only consider the case of $q=\pi^m$ for some fixed uniformizer $\pi$. With the substitutions $\mathcal{O}_K$ for $\mathbb{Z}_p$, $\pi\in K$ for $p\in\mathbb{Q}_p$, and $p^f\in\mathbb{R}$ for $p\in\mathbb{R}$ (including taking logarithms to be base $p^f$ instead of base $p$), all of the computations for $B_{K,m}$ are identical to those for $B_{p,m}$. E.g., when \eqref{E:Uy} is translated to
\begin{equation}
    U_K(y)=p^{-fv_K(y)}+p^{-f(m-1-v_K(y))}=|y|_K+p^{-f(m-1)}|y|_K^{-1}\label{E:UKy}
\end{equation}
then in direct correspondence with \eqref{E:lambdan}, the coefficients $\lambda_n(y)$ for $n>0$ are given by
\begin{equation}
    \lambda_n(y)=U_K(y)^n\dfrac{p^{f(n+1)}-1}{(p^f-1)(p^f+1)^n(p^{fn}-1)}\lambda_0\label{E:lambdaKn}
\end{equation}
and in direct correspondence with \eqref{E:DBxyA}, whenever $x\neq y$,
\begin{equation}
    DB_{K,m}(x,y)=-\lambda_0\dfrac{p^{-f|v_K(x)-v_K(y)|}}{p^f-1}\dfrac{p^f+1}{p^f+1-U_K(y)}\label{E:DBKxyA}
\end{equation}
Similarly, we obtain in correspondence with \eqref{E:lambda0} that
\begin{equation}
    \lambda_0=\dfrac{p^f(p^f-1)}{p^f+1}\label{E:Klambda0}
\end{equation}
and in correspondence with \eqref{E:CRecur}, when we write $v_K(x)=q$ and $v_K(y)=r$, the symmetrized $C_{K,m}$ is given by
\begin{equation}
    C_{K,m}(q,r)=\begin{cases}
        C_{K,m}(m-1,0)-\dfrac{p^f}{m\Lambda_{m-2}} & q=m-2,r=0\\[10pt]
        \dfrac{\Lambda_q+\Lambda_{q+2}}{\Lambda_q}C_{K,m}(q+1,0)-\dfrac{\Lambda_{q+2}}{\Lambda_q}C_{K,m}(q+2,0)-\dfrac{p^f-1}{m\Lambda_q} & q<m-2,r=0\\[5pt]
        C_{K,m}(q,0)+C_{K,m}(m-1-r,0)-C_{K,m}(m-1,0) & q\geq r\\
        C_{K,m}(m-1-q,0)+C_{K,m}(r,0)-C_{K,m}(m-1,0) & q\leq r
    \end{cases}\label{E:CKRecur}
\end{equation}
where $C_{K,m}(m-1,0)$ is arbitrary and $\Lambda_q=1+p^{-f}-p^{-f}U_K(x)=1+p^{-f}-p^{-f(q+1)}-p^{-f(m-q)}$.

\newpage
\appendix
\section{Appendix: Computing \texorpdfstring{$Dd(x,y)^n$}{Dd(x,y)\^{}n} for \texorpdfstring{$n\geq1$}{n\textgreater=1}}

\subsection{Computing \texorpdfstring{$Dd(x,y)$}{Dd(x,y)}}

Here, we compute $Dd(x,y)$. We have
\begin{equation}
    Dd(x,y)=\int_{\bigcup_{s=0}^{m-1}p^s\mathbb{Z}_p^\times}\dfrac{d(z,y)-d(x,y)}{|z-x|^2}|x|\,dz\label{E:Ddxy}
\end{equation}
Again suppose $v_p(x)=q\neq r=v_p(y)$. As with $D\log(d(x,y))$, the integrand in \eqref{E:Ddxy} vanishes unless $v_p(z)=r$, and $\eqref{E:Ddxy}$ becomes
\begin{equation}
    Dd(x,y)=\int_{p^r\mathbb{Z}_p^\times}\dfrac{p^{r-v_p(z-y)}-1}{\max\{p^{-2q},p^{-2r}\}}p^{-q}\,dz
\end{equation}
Proceeding as with $D\log(d(x,y))$, we have that for $v_p(x)=q\neq r=v_p(y)$,
\begin{align}
    Dd(x,y)&=\int_{\bigcup_{i=r}^{\infty}\partial y_i}\dfrac{p^{r-v_p(z-y)}-1}{\max\{p^{-2q},p^{-2r}\}}p^{-q}\,dz\\
    &=\sum_{i=r}^{\infty}\int_{\partial y_i}\dfrac{p^{r-v_p(z-y)}-1}{\max\{p^{-2q},p^{-2r}\}}p^{-q}\,dz\nonumber\\
    &=\sum_{i=r}^{\infty}\int_{\partial y_i}\dfrac{p^{r-i}-1}{\max\{p^{-2q},p^{-2r}\}}p^{-q}\,dz\nonumber\\
    &=\sum_{i=r+1}^{\infty}\dfrac{p^{r-i}-1}{\max\{p^{-2q},p^{-2r}\}}p^{-q}(p-1)p^{-i-1}\label{E:Ddqr}
\end{align}
Evaluating \eqref{E:Ddqr}, we obtain that, when $v_p(x)=q\neq r=v_p(y)$,
\begin{equation}
    Dd(x,y)=-\dfrac{p^{-|q-r|}}{p+1}\label{E:DdqrA}
\end{equation}

Again suppose $v_p(x)=v_p(y)=r$ and $v_p(x-y)=\ell$. For $s\neq r$ we have
\begin{align}
    \int_{p^s\mathbb{Z}_p^\times}\dfrac{d(z,y)-d(x,y)}{|z-x|^2}|x|\,dz&=\int_{p^s\mathbb{Z}_p^\times}\dfrac{1-p^{r-\ell}}{\max\{p^{-2r},p^{-2s}\}}p^{-r}\,dz\nonumber\\
    &=\dfrac{1-p^{r-\ell}}{\max\{p^{-2r},p^{-2s}\}}(p-1)p^{-r-s-1}\label{E:Ddrrs}
\end{align}
By the same reasoning as with $D\log(d(x,y))$,
\begin{align}
    \int_{p^r\mathbb{Z}_p^\times}\dfrac{d(z,y)-d(x,y)}{|z-x|^2}|x|\,dz&=\sum_{i=r}^{\infty}\int_{\partial x_i}\dfrac{p^{r-v_p(z-y)}-p^{r-\ell}}{p^{-2i}}p^{-r}\,dz\nonumber\\
    &=\sum_{i=r}^{\ell-1}\int_{\partial x_i}\dfrac{p^{r-i}-p^{r-\ell}}{p^{-2i}}p^{-r}\,dz+\int_{\partial x_\ell}\dfrac{p^{r-v_p(z-y)}-p^{r-\ell}}{p^{-2\ell}}p^{-r}\,dz\nonumber\\
    &=\sum_{i=r}^{\ell-1}\int_{\partial x_i}\dfrac{p^{r-i}-p^{r-\ell}}{p^{-2i}}p^{-r}\,dz+\sum_{i=\ell+1}^{\infty}\int_{\partial y_i}\dfrac{p^{r-i}-p^{r-\ell}}{p^{-2\ell}}p^{-r}\,dz\nonumber\\
    &=\dfrac{p-2}{p}\left(1-p^{r-\ell}\right)+\sum_{i=r+1,i\neq\ell}^{\infty}\left(p^{r-i}-p^{r-\ell}\right)(p-1)p^{\min\{2i,2\ell\}-i-r-1}\nonumber\\
    &=-\dfrac{1}{p+1}-\dfrac{2}{p}\left(1-p^{-(\ell-r)}\right)+\dfrac{p-1}{p}(\ell-r)\label{E:Ddrrr}
\end{align}
Combining \eqref{E:Ddrrs} and \eqref{E:Ddrrr}, we obtain (for $v_p(x)=v_p(y)=r$ and $v_p(x-y)=\ell$)
\begin{align}
    &\int_{\bigcup_{s=0}^{m-1}p^s\mathbb{Z}_p^\times}\dfrac{d(z,y)-d(x,y)}{|z-x|^2}|x|\,dz\nonumber\\
    ={}&-\dfrac{1}{p+1}-\dfrac{2}{p}\left(1-p^{-(\ell-r)}\right)+\dfrac{p-1}{p}(\ell-r)+\sum_{s=0,s\neq r}^{m-1}\left(1-p^{r-\ell}\right)(p-1)p^{-|r-s|-1}\nonumber\\
    ={}&-\dfrac{1}{p+1}-\left(p^{r-m}+p^{-r-1}\right)\left(1-p^{-(\ell-r)}\right)+\dfrac{p-1}{p}(\ell-r)\label{E:DdrrA}
\end{align}

Taken together, \eqref{E:DdqrA} and \eqref{E:DdrrA} give
\begin{equation}
    Dd(x,y)=\begin{cases}
        -\dfrac{p^{-|v_p(x)-v_p(y)|}}{p+1} & v_p(x)\neq v_p(y)\\[15pt]
        \begin{aligned}
            -\dfrac{1}{p+1}-\dfrac{U(y)}{p}\left(1-p^{-(v_p(x-y)-v_p(y))}\right)\\
            +\dfrac{p-1}{p}(v_p(x-y)-v_p(y))
        \end{aligned} & v_p(x)=v_p(y)
    \end{cases}
\end{equation}

\subsection{Computing \texorpdfstring{$Dd(x,y)^n$}{Dd(x,y)\^{}n} for \texorpdfstring{$n>1$}{n\textgreater1}}

We now compute $Dd(x,y)^n$ for $n>1$. We have
\begin{equation}
    Dd(x,y)^n=\int_{\bigcup_{s=0}^{m-1}p^s\mathbb{Z}_p^\times}\dfrac{d(z,y)^n-d(x,y)^n}{|z-x|^2}|x|\,dz\label{E:Ddxyn}
\end{equation}
We again suppose $v_p(x)=q\neq r=v_p(y)$. As before, the integrand in \eqref{E:Ddxyn} vanishes unless $v_p(z)=r$, and $\eqref{E:Ddxyn}$ becomes
\begin{equation}
    Dd(x,y)^n=\int_{p^r\mathbb{Z}_p^\times}\dfrac{p^{n(r-v_p(z-y))}-1}{\max\{p^{-2q},p^{-2r}\}}p^{-q}\,dz
\end{equation}
Proceeding as before, for $v_p(x)=q\neq r=v_p(y)$ we have
\begin{align}
    Dd(x,y)^n&=\int_{\bigcup_{i=r}^{\infty}\partial y_i}\dfrac{p^{n(r-v_p(z-y))}-1}{\max\{p^{-2q},p^{-2r}\}}p^{-q}\,dz\\
    &=\sum_{i=r}^{\infty}\int_{\partial y_i}\dfrac{p^{n(r-v_p(z-y))}-1}{\max\{p^{-2q},p^{-2r}\}}p^{-q}\,dz\nonumber\\
    &=\sum_{i=r}^{\infty}\int_{\partial y_i}\dfrac{p^{n(r-i)}-1}{\max\{p^{-2q},p^{-2r}\}}p^{-q}\,dz\nonumber\\
    &=\sum_{i=r+1}^{\infty}\dfrac{p^{n(r-i)}-1}{\max\{p^{-2q},p^{-2r}\}}p^{-q}(p-1)p^{-i-1}\label{E:Ddnqr}
\end{align}
Evaluating \eqref{E:Ddnqr}, we obtain that, when $v_p(x)=q\neq r=v_p(y)$,
\begin{equation}
    Dd(x,y)^n=-\dfrac{(p^n-1)p^{-|q-r|}}{p^{n+1}-1}\label{E:DdnqrA}
\end{equation}

We again suppose $v_p(x)=v_p(y)=r$ and $v_p(x-y)=\ell$. For $s\neq r$ we have
\begin{align}
    \int_{p^s\mathbb{Z}_p^\times}\dfrac{d(z,y)^n-d(x,y)^n}{|z-x|^2}|x|\,dz&=\int_{p^s\mathbb{Z}_p^\times}\dfrac{1-p^{n(r-\ell)}}{\max\{p^{-2r},p^{-2s}\}}p^{-r}\,dz\nonumber\\
    &=\dfrac{1-p^{n(r-\ell)}}{\max\{p^{-2r},p^{-2s}\}}(p-1)p^{-r-s-1}\label{E:Ddnrrs}
\end{align}
By the same reasoning as before,
\begin{align}
    &\int_{p^r\mathbb{Z}_p^\times}\dfrac{d(z,y)^n-d(x,y)^n}{|z-x|^2}|x|\,dz\\
    ={}&\sum_{i=r}^{\infty}\int_{\partial x_i}\dfrac{p^{n(r-v_p(z-y))}-p^{n(r-\ell)}}{p^{-2i}}p^{-r}\,dz\nonumber\\
    ={}&\sum_{i=r}^{\ell-1}\int_{\partial x_i}\dfrac{p^{n(r-i)}-p^{n(r-\ell)}}{p^{-2i}}p^{-r}\,dz+\int_{\partial x_\ell}\dfrac{p^{n(r-v_p(z-y))}-p^{n(r-\ell)}}{p^{-2\ell}}p^{-r}\,dz\nonumber\\
    ={}&\sum_{i=r}^{\ell-1}\int_{\partial x_i}\dfrac{p^{n(r-i)}-p^{n(r-\ell)}}{p^{-2i}}p^{-r}\,dz+\sum_{i=\ell+1}^{\infty}\int_{\partial y_i}\dfrac{p^{n(r-i)}-p^{n(r-\ell)}}{p^{-2\ell}}p^{-r}\,dz\nonumber\\
    ={}&\dfrac{p-2}{p}\left(1-p^{n(r-\ell)}\right)+\sum_{i=r+1,i\neq\ell}^{\infty}\left(p^{n(r-i)}-p^{n(r-\ell)}\right)(p-1)p^{\min\{2i,2\ell\}-i-r-1}\nonumber\\
    ={}&\dfrac{p^{n-1}-p^{-1}}{p^{n-1}-1}-\dfrac{2}{p}\left(1-p^{-n(\ell-r)}\right)-p^{(1-n)(\ell-r)}\left(\dfrac{(p+1)(p^n-1)^2}{p(p^{n-1}-1)(p^{n+1}-1)}\right)\label{E:Ddnrrr}
\end{align}
Combining \eqref{E:Ddnrrs} and \eqref{E:Ddnrrr}, we obtain (for $v_p(x)=v_p(y)=r$ and $v_p(x-y)=\ell$)
\begin{align}
    &\int_{\bigcup_{s=0}^{m-1}p^s\mathbb{Z}_p^\times}\dfrac{d(z,y)-d(x,y)}{|z-x|^2}|x|\,dz\nonumber\\
    ={}&\dfrac{p^{n-1}-p^{-1}}{p^{n-1}-1}-\dfrac{2}{p}\left(1-p^{-n(\ell-r)}\right)-p^{(1-n)(\ell-r)}\left(\dfrac{(p+1)(p^n-1)^2}{p(p^{n-1}-1)(p^{n+1}-1)}\right)\nonumber\\
    &+\sum_{s=0,s\neq r}^{m-1}\left(1-p^{n(r-\ell)}\right)(p-1)p^{-|r-s|-1}\nonumber\\
    ={}&\dfrac{p^{n-1}-p^{-1}}{p^{n-1}-1}-\dfrac{U(y)}{p}\left(1-p^{-n(\ell-r)}\right)-p^{(1-n)(\ell-r)}\left(\dfrac{(p+1)(p^n-1)^2}{p(p^{n-1}-1)(p^{n+1}-1)}\right)\label{E:DdnrrA}
\end{align}

Taken together, \eqref{E:DdnqrA} and \eqref{E:DdnrrA} give
\begin{equation}
    Dd(x,y)^n=\begin{cases}
        -\dfrac{p^n-1}{p^{n+1}-1}p^{-|v_p(x)-v_p(y)|} & v_p(x)\neq v_p(y)\\[15pt]
        \begin{aligned}
            \dfrac{p^{n-1}-p^{-1}}{p^{n-1}-1}-\dfrac{U(y)}{p}\left(1-p^{-n(v_p(x-y)-v_p(y))}\right)\\
            -\dfrac{(p+1)(p^n-1)^2}{p(p^{n-1}-1)(p^{n+1}-1)}p^{(1-n)(v_p(x-y)-v_p(y))}
        \end{aligned} & v_p(x)=v_p(y)
    \end{cases}
\end{equation}

\newpage
\section{Appendix: Some numerical results for \texorpdfstring{$C_{p,m}(x,y)$}{C \textunderscore p,m(x,y)}}
\[
C_{p,2}=\left(\begin{array}{cc} -\frac{p^3}{2\,\left(p-1\right)\,\left(p+1\right)} & 0\\ 0 & -\frac{p^3}{2\,\left(p-1\right)\,\left(p+1\right)} \end{array}\right)
\]
\[
C_{p,3}=\left(\begin{array}{ccc} -\frac{p^4\,\left(p+1\right)}{\left(p-1\right)\,\left(p+2\right)\,\left(p^2+p+1\right)} & -\frac{p^3}{3\,\left(p-1\right)\,\left(p+2\right)} & 0\\ -\frac{p^3}{3\,\left(p-1\right)\,\left(p+2\right)} & -\frac{2\,p^3}{3\,\left(p-1\right)\,\left(p+2\right)} & -\frac{p^3}{3\,\left(p-1\right)\,\left(p+2\right)}\\ 0 & -\frac{p^3}{3\,\left(p-1\right)\,\left(p+2\right)} & -\frac{p^4\,\left(p+1\right)}{\left(p-1\right)\,\left(p+2\right)\,\left(p^2+p+1\right)} \end{array}\right)
\]
\[
C_{p,4}=\left(\begin{array}{cccc} -\frac{p^5\,\left(3\,p^2+2\,p+1\right)}{2\,\left(p^2+1\right)\,\left(p-1\right)\,{\left(p+1\right)}^3} & -\frac{p^4\,\left(3\,p+1\right)}{4\,\left(p-1\right)\,{\left(p+1\right)}^3} & -\frac{p^4}{4\,\left(p-1\right)\,{\left(p+1\right)}^2} & 0\\ -\frac{p^4\,\left(3\,p+1\right)}{4\,\left(p-1\right)\,{\left(p+1\right)}^3} & -\frac{p^4\,\left(2\,p+1\right)}{2\,\left(p-1\right)\,{\left(p+1\right)}^3} & -\frac{p^4}{2\,\left(p-1\right)\,{\left(p+1\right)}^2} & -\frac{p^4}{4\,\left(p-1\right)\,{\left(p+1\right)}^2}\\ -\frac{p^4}{4\,\left(p-1\right)\,{\left(p+1\right)}^2} & -\frac{p^4}{2\,\left(p-1\right)\,{\left(p+1\right)}^2} & -\frac{p^4\,\left(2\,p+1\right)}{2\,\left(p-1\right)\,{\left(p+1\right)}^3} & -\frac{p^4\,\left(3\,p+1\right)}{4\,\left(p-1\right)\,{\left(p+1\right)}^3}\\ 0 & -\frac{p^4}{4\,\left(p-1\right)\,{\left(p+1\right)}^2} & -\frac{p^4\,\left(3\,p+1\right)}{4\,\left(p-1\right)\,{\left(p+1\right)}^3} & -\frac{p^5\,\left(3\,p^2+2\,p+1\right)}{2\,\left(p^2+1\right)\,\left(p-1\right)\,{\left(p+1\right)}^3} \end{array}\right)
\]
\newgeometry{top=3cm, bottom=3cm, left=2cm, right=2cm}
\[
C_{p,5} = 
\left(
\begin{array}{cc}
-\frac{p^6(p+1)(2p^4+3p^3+3p^2+p+1)}{(p-1)(p^2+2p+2)(p^3+2p^2+p+1)(p^4+p^3+p^2+p+1)} & 
-\frac{p^5(2p+1)(3p+2)}{5(p-1)(p^2+2p+2)(p^3+2p^2+p+1)} \\
-\frac{p^5(2p+1)(3p+2)}{5(p-1)(p^2+2p+2)(p^3+2p^2+p+1)} & 
-\frac{p^5(7p^2+9p+4)}{5(p-1)(p^2+2p+2)(p^3+2p^2+p+1)} \\
-\frac{p^4(p+1)(3p^2+p+1)}{5(p-1)(p^2+2p+2)(p^3+2p^2+p+1)} & 
-\frac{p^4(2p+1)(2p^2+2p+1)}{5(p-1)(p^2+2p+2)(p^3+2p^2+p+1)} \\
-\frac{p^5}{5(p-1)(p^3+2p^2+p+1)} & 
-\frac{2p^5}{5(p-1)(p^3+2p^2+p+1)} \\
0 & 
-\frac{p^5}{5(p-1)(p^3+2p^2+p+1)}
\end{array}
\right.
\]
\[
\left.
\begin{array}{ccc}
-\frac{p^4(p+1)(3p^2+p+1)}{5(p-1)(p^2+2p+2)(p^3+2p^2+p+1)} & 
-\frac{p^5}{5(p-1)(p^3+2p^2+p+1)} & 
0 \\
-\frac{p^4(2p+1)(2p^2+2p+1)}{5(p-1)(p^2+2p+2)(p^3+2p^2+p+1)} & 
-\frac{2p^5}{5(p-1)(p^3+2p^2+p+1)} & 
-\frac{p^5}{5(p-1)(p^3+2p^2+p+1)} \\
-\frac{2p^4(p+1)(3p^2+p+1)}{5(p-1)(p^2+2p+2)(p^3+2p^2+p+1)} & 
-\frac{p^4(2p+1)(2p^2+2p+1)}{5(p-1)(p^2+2p+2)(p^3+2p^2+p+1)} & 
-\frac{p^4(p+1)(3p^2+p+1)}{5(p-1)(p^2+2p+2)(p^3+2p^2+p+1)} \\
-\frac{p^4(2p+1)(2p^2+2p+1)}{5(p-1)(p^2+2p+2)(p^3+2p^2+p+1)} & 
-\frac{p^5(7p^2+9p+4)}{5(p-1)(p^2+2p+2)(p^3+2p^2+p+1)} & 
-\frac{p^5(2p+1)(3p+2)}{5(p-1)(p^2+2p+2)(p^3+2p^2+p+1)} \\
-\frac{p^4(p+1)(3p^2+p+1)}{5(p-1)(p^2+2p+2)(p^3+2p^2+p+1)} & 
-\frac{p^5(2p+1)(3p+2)}{5(p-1)(p^2+2p+2)(p^3+2p^2+p+1)} & 
-\frac{p^6(p+1)(2p^4+3p^3+3p^2+p+1)}{(p-1)(p^2+2p+2)(p^3+2p^2+p+1)(p^4+p^3+p^2+p+1)}
\end{array}
\right)
\]
\[
C_{p,6} = 
\left(\begin{array}{cc}
-\frac{p^7(5p^5+2p^4+4p^3+2p^2+p+1)}{2(p-1)(p+1)(p^2+p+1)^2(p^2-p+1)(p^3+p^2+1)} & 
-\frac{p^6(2p+1)(5p^3+3p^2+3p+1)}{6(p-1)(p+1)(p^2+p+1)^2(p^3+p^2+1)} \\
-\frac{p^6(2p+1)(5p^3+3p^2+3p+1)}{6(p-1)(p+1)(p^2+p+1)^2(p^3+p^2+1)} & 
-\frac{p^6(11p^4+13p^3+12p^2+7p+2)}{6(p-1)(p+1)(p^2+p+1)^2(p^3+p^2+1)} \\
-\frac{p^5(6p^5+7p^4+5p^3+6p^2+2p+1)}{6(p-1)(p+1)(p^2+p+1)^2(p^3+p^2+1)} & 
-\frac{p^5(7p^4+2p^3+6p^2+2p+1)}{6(p-1)(p^2+p+1)^2(p^3+p^2+1)} \\
-\frac{p^5(3p^3+p^2+p+1)}{6(p-1)(p+1)(p^2+p+1)(p^3+p^2+1)} & 
-\frac{p^5(2p+1)(2p^2+1)}{6(p-1)(p+1)(p^2+p+1)(p^3+p^2+1)} \\
-\frac{p^6}{6(p-1)(p+1)(p^3+p^2+1)} & 
-\frac{p^6}{3(p-1)(p+1)(p^3+p^2+1)} \\
0 & 
-\frac{p^6}{6(p-1)(p+1)(p^3+p^2+1)}
\end{array}\right.
\]
\[
\left.\begin{array}{cc}
-\frac{p^5(6p^5+7p^4+5p^3+6p^2+2p+1)}{6(p-1)(p+1)(p^2+p+1)^2(p^3+p^2+1)} & 
-\frac{p^5(3p^3+p^2+p+1)}{6(p-1)(p+1)(p^2+p+1)(p^3+p^2+1)} \\
-\frac{p^5(7p^4+2p^3+6p^2+2p+1)}{6(p-1)(p^2+p+1)^2(p^3+p^2+1)} & 
-\frac{p^5(2p+1)(2p^2+1)}{6(p-1)(p+1)(p^2+p+1)(p^3+p^2+1)} \\
-\frac{p^5(9p^5+11p^4+10p^3+9p^2+4p+2)}{6(p-1)(p+1)(p^2+p+1)^2(p^3+p^2+1)} & 
-\frac{p^5(3p^3+p^2+p+1)}{3(p-1)(p+1)(p^2+p+1)(p^3+p^2+1)} \\
-\frac{p^5(3p^3+p^2+p+1)}{3(p-1)(p+1)(p^2+p+1)(p^3+p^2+1)} & 
-\frac{p^5(9p^5+11p^4+10p^3+9p^2+4p+2)}{6(p-1)(p+1)(p^2+p+1)^2(p^3+p^2+1)} \\
-\frac{p^5(2p+1)(2p^2+1)}{6(p-1)(p+1)(p^2+p+1)(p^3+p^2+1)} & 
-\frac{p^5(7p^4+2p^3+6p^2+2p+1)}{6(p-1)(p^2+p+1)^2(p^3+p^2+1)} \\
-\frac{p^5(3p^3+p^2+p+1)}{6(p-1)(p+1)(p^2+p+1)(p^3+p^2+1)} & 
-\frac{p^5(6p^5+7p^4+5p^3+6p^2+2p+1)}{6(p-1)(p+1)(p^2+p+1)^2(p^3+p^2+1)}
\end{array}\right.
\]
\[
\left.\begin{array}{cc}
-\frac{p^6}{6(p-1)(p+1)(p^3+p^2+1)} & 
0 \\
-\frac{p^6}{6(p-1)(p+1)(p^3+p^2+1)} & 
-\frac{p^6}{6(p-1)(p+1)(p^3+p^2+1)} \\
-\frac{p^5(2p+1)(2p^2+1)}{6(p-1)(p+1)(p^2+p+1)(p^3+p^2+1)} & 
-\frac{p^5(3p^3+p^2+p+1)}{6(p-1)(p+1)(p^2+p+1)(p^3+p^2+1)} \\
-\frac{p^5(7p^4+2p^3+6p^2+2p+1)}{6(p-1)(p^2+p+1)^2(p^3+p^2+1)} & 
-\frac{p^5(6p^5+7p^4+5p^3+6p^2+2p+1)}{6(p-1)(p+1)(p^2+p+1)^2(p^3+p^2+1)} \\
-\frac{p^6(11p^4+13p^3+12p^2+7p+2)}{6(p-1)(p+1)(p^2+p+1)^2(p^3+p^2+1)} & 
-\frac{p^6(2p+1)(5p^3+3p^2+3p+1)}{6(p-1)(p+1)(p^2+p+1)^2(p^3+p^2+1)} \\
-\frac{p^6(2p+1)(5p^3+3p^2+3p+1)}{6(p-1)(p+1)(p^2+p+1)^2(p^3+p^2+1)} & 
-\frac{p^7(5p^5+2p^4+4p^3+2p^2+p+1)}{2(p-1)(p+1)(p^2+p+1)^2(p^2-p+1)(p^3+p^2+1)}
\end{array}\right)
\]
\newpage
\[
C_{p,7} = 
\left(\begin{array}{c}
-\frac{p^8(p+1)(3p^{11}+10p^{10}+17p^9+23p^8+24p^7+22p^6+18p^5+14p^4+8p^3+5p^2+2p+1)}{(p-1)(p^3+2p^2+2p+2)(p^4+2p^3+2p^2+p+1)(p^5+2p^4+p^3+p^2+p+1)(p^6+p^5+p^4+p^3+p^2+p+1)} \\
-\frac{p^7(15p^7+46p^6+57p^5+53p^4+39p^3+22p^2+11p+2)}{7(p-1)(p^3+2p^2+2p+2)(p^4+2p^3+2p^2+p+1)(p^5+2p^4+p^3+p^2+p+1)} \\
-\frac{p^6(p+1)(10p^7+21p^6+16p^5+18p^4+14p^3+13p^2+4p+2)}{7(p-1)(p^3+2p^2+2p+2)(p^4+2p^3+2p^2+p+1)(p^5+2p^4+p^3+p^2+p+1)} \\
-\frac{p^5(6p^9+19p^8+25p^7+26p^6+25p^5+21p^4+14p^3+7p^2+3p+1)}{7(p-1)(p^3+2p^2+2p+2)(p^4+2p^3+2p^2+p+1)(p^5+2p^4+p^3+p^2+p+1)} \\
-\frac{p^6(p+1)(3p^4+p^3+p^2+p+1)}{7(p-1)(p^4+2p^3+2p^2+p+1)(p^5+2p^4+p^3+p^2+p+1)} \\
-\frac{p^7}{7(p-1)(p^5+2p^4+p^3+p^2+p+1)} \\
0
\end{array}\right.
\]
\[
\left.\begin{array}{cc}
-\frac{p^7(15p^7+46p^6+57p^5+53p^4+39p^3+22p^2+11p+2)}{7(p-1)(p^3+2p^2+2p+2)(p^4+2p^3+2p^2+p+1)(p^5+2p^4+p^3+p^2+p+1)} & 
-\frac{p^6(p+1)(10p^7+21p^6+16p^5+18p^4+14p^3+13p^2+4p+2)}{7(p-1)(p^3+2p^2+2p+2)(p^4+2p^3+2p^2+p+1)(p^5+2p^4+p^3+p^2+p+1)} \\
-\frac{p^7(16p^7+50p^6+65p^5+64p^4+50p^3+30p^2+15p+4)}{7(p-1)(p^3+2p^2+2p+2)(p^4+2p^3+2p^2+p+1)(p^5+2p^4+p^3+p^2+p+1)} & 
-\frac{p^6(11p^8+35p^7+45p^6+45p^5+43p^4+35p^3+21p^2+8p+2)}{7(p-1)(p^3+2p^2+2p+2)(p^4+2p^3+2p^2+p+1)(p^5+2p^4+p^3+p^2+p+1)} \\
-\frac{p^6(11p^8+35p^7+45p^6+45p^5+43p^4+35p^3+21p^2+8p+2)}{7(p-1)(p^3+2p^2+2p+2)(p^4+2p^3+2p^2+p+1)(p^5+2p^4+p^3+p^2+p+1)} & 
-\frac{p^6(p+1)(13p^7+28p^6+25p^5+29p^4+21p^3+19p^2+8p+4)}{7(p-1)(p^3+2p^2+2p+2)(p^4+2p^3+2p^2+p+1)(p^5+2p^4+p^3+p^2+p+1)} \\
-\frac{p^5(7p^9+23p^8+33p^7+37p^6+36p^5+29p^4+18p^3+9p^2+3p+1)}{7(p-1)(p^3+2p^2+2p+2)(p^4+2p^3+2p^2+p+1)(p^5+2p^4+p^3+p^2+p+1)} & 
-\frac{p^5(9p^9+29p^8+41p^7+46p^6+43p^5+34p^4+24p^3+13p^2+5p+1)}{7(p-1)(p^3+2p^2+2p+2)(p^4+2p^3+2p^2+p+1)(p^5+2p^4+p^3+p^2+p+1)} \\
-\frac{p^6(2p+1)(2p^4+2p^3+p^2+p+1)}{7(p-1)(p^4+2p^3+2p^2+p+1)(p^5+2p^4+p^3+p^2+p+1)} & 
-\frac{2p^6(p+1)(3p^4+p^3+p^2+p+1)}{7(p-1)(p^4+2p^3+2p^2+p+1)(p^5+2p^4+p^3+p^2+p+1)} \\
-\frac{2p^7}{7(p-1)(p^5+2p^4+p^3+p^2+p+1)} & 
-\frac{p^6(2p+1)(2p^4+2p^3+p^2+p+1)}{7(p-1)(p^4+2p^3+2p^2+p+1)(p^5+2p^4+p^3+p^2+p+1)} \\
-\frac{p^7}{7(p-1)(p^5+2p^4+p^3+p^2+p+1)} & 
-\frac{p^6(p+1)(3p^4+p^3+p^2+p+1)}{7(p-1)(p^4+2p^3+2p^2+p+1)(p^5+2p^4+p^3+p^2+p+1)}
\end{array}\right.
\]
\[
\left.\begin{array}{cc}
-\frac{p^5(6p^9+19p^8+25p^7+26p^6+25p^5+21p^4+14p^3+7p^2+3p+1)}{7(p-1)(p^3+2p^2+2p+2)(p^4+2p^3+2p^2+p+1)(p^5+2p^4+p^3+p^2+p+1)} & 
-\frac{p^6(p+1)(3p^4+p^3+p^2+p+1)}{7(p-1)(p^4+2p^3+2p^2+p+1)(p^5+2p^4+p^3+p^2+p+1)} \\
-\frac{p^5(7p^9+23p^8+33p^7+37p^6+36p^5+29p^4+18p^3+9p^2+3p+1)}{7(p-1)(p^3+2p^2+2p+2)(p^4+2p^3+2p^2+p+1)(p^5+2p^4+p^3+p^2+p+1)} & 
-\frac{p^6(2p+1)(2p^4+2p^3+p^2+p+1)}{7(p-1)(p^4+2p^3+2p^2+p+1)(p^5+2p^4+p^3+p^2+p+1)} \\
-\frac{p^5(9p^9+29p^8+41p^7+46p^6+43p^5+34p^4+24p^3+13p^2+5p+1)}{7(p-1)(p^3+2p^2+2p+2)(p^4+2p^3+2p^2+p+1)(p^5+2p^4+p^3+p^2+p+1)} & 
-\frac{2p^6(p+1)(3p^4+p^3+p^2+p+1)}{7(p-1)(p^4+2p^3+2p^2+p+1)(p^5+2p^4+p^3+p^2+p+1)} \\
-\frac{2p^5(6p^9+19p^8+25p^7+26p^6+25p^5+21p^4+14p^3+7p^2+3p+1)}{7(p-1)(p^3+2p^2+2p+2)(p^4+2p^3+2p^2+p+1)(p^5+2p^4+p^3+p^2+p+1)} & 
-\frac{p^5(9p^9+29p^8+41p^7+46p^6+43p^5+34p^4+24p^3+13p^2+5p+1)}{7(p-1)(p^3+2p^2+2p+2)(p^4+2p^3+2p^2+p+1)(p^5+2p^4+p^3+p^2+p+1)} \\
-\frac{p^5(9p^9+29p^8+41p^7+46p^6+43p^5+34p^4+24p^3+13p^2+5p+1)}{7(p-1)(p^3+2p^2+2p+2)(p^4+2p^3+2p^2+p+1)(p^5+2p^4+p^3+p^2+p+1)} & 
-\frac{p^6(p+1)(13p^7+28p^6+25p^5+29p^4+21p^3+19p^2+8p+4)}{7(p-1)(p^3+2p^2+2p+2)(p^4+2p^3+2p^2+p+1)(p^5+2p^4+p^3+p^2+p+1)} \\
-\frac{p^5(7p^9+23p^8+33p^7+37p^6+36p^5+29p^4+18p^3+9p^2+3p+1)}{7(p-1)(p^3+2p^2+2p+2)(p^4+2p^3+2p^2+p+1)(p^5+2p^4+p^3+p^2+p+1)} & 
-\frac{p^6(11p^8+35p^7+45p^6+45p^5+43p^4+35p^3+21p^2+8p+2)}{7(p-1)(p^3+2p^2+2p+2)(p^4+2p^3+2p^2+p+1)(p^5+2p^4+p^3+p^2+p+1)} \\
-\frac{p^5(6p^9+19p^8+25p^7+26p^6+25p^5+21p^4+14p^3+7p^2+3p+1)}{7(p-1)(p^3+2p^2+2p+2)(p^4+2p^3+2p^2+p+1)(p^5+2p^4+p^3+p^2+p+1)} & 
-\frac{p^6(p+1)(10p^7+21p^6+16p^5+18p^4+14p^3+13p^2+4p+2)}{7(p-1)(p^3+2p^2+2p+2)(p^4+2p^3+2p^2+p+1)(p^5+2p^4+p^3+p^2+p+1)}
\end{array}\right.
\]
\[
\left.\begin{array}{cc}
 -\frac{p^7}{7\,\left(p-1\right)\,\left(p^5+2\,p^4+p^3+p^2+p+1\right)} \\ 
 -\frac{2\,p^7}{7\,\left(p-1\right)\,\left(p^5+2\,p^4+p^3+p^2+p+1\right)} \\ 
 -\frac{p^6\,\left(2\,p+1\right)\,\left(2\,p^4+2\,p^3+p^2+p+1\right)}{7\,\left(p-1\right)\,\left(p^4+2\,p^3+2\,p^2+p+1\right)\,\left(p^5+2\,p^4+p^3+p^2+p+1\right)} \\ 
 -\frac{p^5\,\left(7\,p^9+23\,p^8+33\,p^7+37\,p^6+36\,p^5+29\,p^4+18\,p^3+9\,p^2+3\,p+1\right)}{7\,\left(p-1\right)\,\left(p^3+2\,p^2+2\,p+2\right)\,\left(p^4+2\,p^3+2\,p^2+p+1\right)\,\left(p^5+2\,p^4+p^3+p^2+p+1\right)} \\
 -\frac{p^6\,\left(11\,p^8+35\,p^7+45\,p^6+45\,p^5+43\,p^4+35\,p^3+21\,p^2+8\,p+2\right)}{7\,\left(p-1\right)\,\left(p^3+2\,p^2+2\,p+2\right)\,\left(p^4+2\,p^3+2\,p^2+p+1\right)\,\left(p^5+2\,p^4+p^3+p^2+p+1\right)} \\
 -\frac{p^7\,\left(16\,p^7+50\,p^6+65\,p^5+64\,p^4+50\,p^3+30\,p^2+15\,p+4\right)}{7\,\left(p-1\right)\,\left(p^3+2\,p^2+2\,p+2\right)\,\left(p^4+2\,p^3+2\,p^2+p+1\right)\,\left(p^5+2\,p^4+p^3+p^2+p+1\right)} \\
 -\frac{p^7\,\left(15\,p^7+46\,p^6+57\,p^5+53\,p^4+39\,p^3+22\,p^2+11\,p+2\right)}{7\,\left(p-1\right)\,\left(p^3+2\,p^2+2\,p+2\right)\,\left(p^4+2\,p^3+2\,p^2+p+1\right)\,\left(p^5+2\,p^4+p^3+p^2+p+1\right)}
\end{array}\right.
\]
\[
\left.\begin{array}{cc}
0\\ 
-\frac{p^7}{7\,\left(p-1\right)\,\left(p^5+2\,p^4+p^3+p^2+p+1\right)}\\ 
-\frac{p^6\,\left(p+1\right)\,\left(3\,p^4+p^3+p^2+p+1\right)}{7\,\left(p-1\right)\,\left(p^4+2\,p^3+2\,p^2+p+1\right)\,\left(p^5+2\,p^4+p^3+p^2+p+1\right)}\\ 
-\frac{p^5\,\left(6\,p^9+19\,p^8+25\,p^7+26\,p^6+25\,p^5+21\,p^4+14\,p^3+7\,p^2+3\,p+1\right)}{7\,\left(p-1\right)\,\left(p^3+2\,p^2+2\,p+2\right)\,\left(p^4+2\,p^3+2\,p^2+p+1\right)\,\left(p^5+2\,p^4+p^3+p^2+p+1\right)}\\ 
-\frac{p^6\,\left(p+1\right)\,\left(10\,p^7+21\,p^6+16\,p^5+18\,p^4+14\,p^3+13\,p^2+4\,p+2\right)}{7\,\left(p-1\right)\,\left(p^3+2\,p^2+2\,p+2\right)\,\left(p^4+2\,p^3+2\,p^2+p+1\right)\,\left(p^5+2\,p^4+p^3+p^2+p+1\right)}\\ 
-\frac{p^7\,\left(15\,p^7+46\,p^6+57\,p^5+53\,p^4+39\,p^3+22\,p^2+11\,p+2\right)}{7\,\left(p-1\right)\,\left(p^3+2\,p^2+2\,p+2\right)\,\left(p^4+2\,p^3+2\,p^2+p+1\right)\,\left(p^5+2\,p^4+p^3+p^2+p+1\right)}\\ 
-\frac{p^8\,\left(p+1\right)\,\left(3\,p^{11}+10\,p^{10}+17\,p^9+23\,p^8+24\,p^7+22\,p^6+18\,p^5+14\,p^4+8\,p^3+5\,p^2+2\,p+1\right)}{\left(p-1\right)\,\left(p^3+2\,p^2+2\,p+2\right)\,\left(p^4+2\,p^3+2\,p^2+p+1\right)\,\left(p^5+2\,p^4+p^3+p^2+p+1\right)\,\left(p^6+p^5+p^4+p^3+p^2+p+1\right)}
\end{array}\right)
\]
\restoregeometry

\bibliographystyle{unsrt}
\bibliography{references}

\noindent An Huang, anhuang@brandeis.edu\\
\emph{Department of Mathematics, Brandeis University, Waltham, MA 02453, USA}\\[-8pt]

\noindent Rebecca Rohrlich, rebeccarohrlich@brandeis.edu\\
\emph{Department of Mathematics, Brandeis University, Waltham, MA 02453, USA}\\[-8pt]

\noindent Yaojia Sun, 24210180117@m.fudan.edu.cn\\
\emph{School of Mathematical Sciences, Fudan University, Shanghai, 200433, P.R. China}\\[-8pt]

\noindent Eric Whyman, ewhyman@brandeis.edu\\
\emph{Department of Mathematics, Brandeis University, Waltham, MA 02453, USA}\\

\end{document}